    \newcommand{\rmi}{\ensuremath{\mathrm{i}}}
    \newcommand{\rme}{\ensuremath{\mathrm{e}}}
    \newcommand{\rmd}{\ensuremath{\mathrm{d}}}
    \newcommand{\TT}{\ensuremath{ [T_{1},T_{2}] }}
    \newcommand{\ERRE}[1]{\ensuremath{\mathbb{R}^{#1}}}
    \newcommand{\ELEDOT}[2]{\ensuremath{\left\langle #1,#2 \right\rangle }}
    \newcommand{\ELEDOTT}[2]{\ensuremath{\big\langle #1,#2 \big\rangle_{\TT{}\times\Omega} }}
    \newcommand{\HYPEQN}[3]{\ensuremath{
                            \big( -\rmi\partial_{t} + {#1}_{0}(t,x)\big)^{2}{#3}
                            - \sum_{j=1}^{n} \big(-\rmi\partial_{x_{j}} + {#1}_{j}(t,x) \big)^{2} {#3}
                            + {#2}(t,x){#3} }}
    \newcommand{\HYPOP}[2]{\ensuremath{
                            \big( -\rmi\partial_{t} + {#1}_{0}(t,x)\big)^{2}
                            - \sum_{j=1}^{n} \big(-\rmi\partial_{x_{j}} + {#1}_{j}(t,x) \big)^{2}
                            + {#2}(t,x) }}
    \newcommand{\TRANSPOP}[1]{\ensuremath{
                            -\big( \partial_{t} + \rmi {#1}_{0}(t,x)\big)
                            - \sum_{j=1}^{n} \omega_{j}\big( \partial_{x_{j}} + \rmi {#1}_{j}(t,x) \big)}}
    \newcommand{\DT}{\ensuremath{\partial_{t}}}
    \newcommand{\DXJ}[1]{\ensuremath{\partial_{x_{#1}}}}
    \newcommand{\POTENTIALS}[2]{\ensuremath{\big(\mathcal{#1} (t,x),#2 (t,x)\big)}}
    \theoremstyle{definition}
    \newtheorem{defn}{Definition}[section]
    \theoremstyle{plain}
    \newtheorem{lemma}{Lemma}[section]
    \theoremstyle{plain}
    \newtheorem{thm}[lemma]{Theorem}
    \theoremstyle{plain}
    \theoremstyle{plain}
    \newtheorem{cor}[lemma]{Corolary}
    \theoremstyle{plain}
\begin{document}
    \title{Stability estimate for the relativistic Schr\"odinger 
           equation with time-dependent vector potentials}
    \author{Salazar, Ricardo. }

    \date{\today}

\maketitle

    \begin{abstract}
        We consider the relativistic Schr\"odinger 
	equation with a time dependent vector and scalar potential on 
	a bounded cylindrical domain. Using a Geometric Optics
	Anzats we stablish a logarithmic stability estimate for the recovery
	of the vector potentials.  
    \end{abstract}

\section{Introduction}
     Let $\Omega$ be a bounded domain in $\ERRE{n}$, $n\ge 2$,
     consider the hyperbolic equation with time dependent coefficients
     \begin{equation}\label{hyp:eqn}
     \HYPEQN{A}{V}{u} = 0 \quad\text{in }\ERRE{}\times\Omega,
     \end{equation}
     where $A_{j}(t,x)$, $0\le j \le n$, and $V(t,x)$ 
     are compactly supported smooth functions. 

     The vector field $\mathcal{A}(t,x)=
     (A_{0}(t,x),\dots,A_{n}(t,x))$ is called the \emph{vector potential}, the
     function $V(t,x)$ is called the \emph{scalar potential} and 
     equation (\ref{hyp:eqn}) is often referred to as the \emph{relativistic Schr\"odinger
     equation} or, in the case where the vector potential is zero and the scalar
     potential is proportional to the mass of a free particle, it is referred to as
     the \emph{Klein-Gordon equation} (see \cite{Schiff}).
     
     We impose the initial and boundary conditions
    \begin{align}\label{hyp:eqn1a}
     u(t,x) = \partial_{t}u(t,x) & = 0\makebox[2.5 em]{}
                                      \quad\text{for }t << 0 \\ \label{hyp:eqn1b}
                          u(t,x) & = f\makebox[2.5 em]{$(t,x)$}
                                      \quad\text{on }\ERRE{}\times\partial\Omega,
     \end{align}
     where $f$ is a compactly supported smooth function on $\ERRE{}\times\partial\Omega$.
     Solutions to (\ref{hyp:eqn}) satisfying (\ref{hyp:eqn1a}) and (\ref{hyp:eqn1b})
     exist and are unique (Theorem 8.1 in \cite{Isakov}) and we can define the \emph{Dirichlet to Neumann} operator by
     \begin{equation}\label{D:to:N}
     \Lambda(f) := \left( \partial_{\nu} + \rmi A(t,x)\cdot\nu
     \right)u(t,x) \Big|_{\ERRE{}\times\partial\Omega}
     \end{equation}
     where $u$ is the solution of (\ref{hyp:eqn})-(\ref{hyp:eqn1b}), $\nu$ is the
     exterior unit normal to $\partial\Omega$ and we have set $A(t,x) =
     (A_{1}(t,x),\dots,A_{n}(t,x))$.
     The \emph{Inverse Boundary Value Problem} is the recovery of $\mathcal{A}(t,x)$
     and $V(t,x)$ knowing $\Lambda(f)$ for all $f\in C_{0}^{\infty}\big(\ERRE{}
     \times \partial\Omega \big)$.

  \begin{defn}\label{def:gauge:equiv}
     The pair $\POTENTIALS{A}{V}$ and
     $\POTENTIALS{A'}{V'}$ are said to be \emph{gauge equivalent} if
     there exists $g(t,x) \in
     C^{\infty}(\ERRE{}\times\overline{\Omega})$ such that $g(t,x)\not = 0$
     on $\ERRE{}\times\overline{\Omega})$, $g=1$ on $\ERRE{}\times\partial\Omega$ and
     \begin{align*}
        \mathcal{A}'(t,x)= & \mathcal{A}(t,x) - \frac{\rmi}{g(t,x)} \nabla_{t,x} g(t,x) \\
        V'(t,x) = & V(t,x),
     \end{align*}
     where $\nabla_{t,x} :=(\partial_{t},\partial_{x})=
     (\partial_{t},\partial_{x_{1}},\dots,\partial_{x_{n}})$ is the $(n+1)$-dimensional
     gradient. \\
     The mapping $(\mathcal{A},V)\to(\mathcal{A}^{\prime},V^{\prime})$
     is called a \emph{gauge transform}. \\
     The Dirichlet to Neumann maps $\Lambda$ and $\Lambda'$ are said to be 
     \emph{gauge equivalent}
     if for all $f(t,x)\in C_{0}^{\infty}(\ERRE{}\times\partial\Omega)$,
     \begin{equation}\label{eq:1.7b}
        \Lambda'\big(g(t,x)f(t,x)\big) =
        g(t,x) \Lambda\big(f(t,x) \big).
     \end{equation}
  \end{defn}
  \textbf{Remark:} When $\Omega$ is simply connected, the
  gauge $g$ has the particular form $g(t,x)=\rme^{\rmi\varphi(t,x)}$ where
  $\varphi(t,x)\in C^{\infty}(\ERRE{}\times\Omega)$.
  Then $-\frac{\rmi}{g(t,x)}\nabla_{(t,x)}g(t,x)=\nabla_{(t,x)}\varphi(t,x)$
  and two vector potentials are gauge equivalent if their
  difference is the gradient of a smooth function.
  \\

      Inverse problems is a topic in mathematics that has been growing in interest 
      in part, due to its wide range of applications, from medicine
      to acoustics to electromagnetism (see for instance \cite{Isakov}
      for some of the latest tools and techniques employed in the solutions of these problems).
      In the case of the hyperbolic inverse boundary value problem (1)-(4) with time 
      independent coefficients, a powerful tool called the \emph{boundary control method}, 
      or BC-method for short, was discovered by Belishev (see \cite{Belishev}). It was later 
      developed by Belishev, Kurylev, Lassas, and others (\cite{Kurylev},\cite{Kurylev:Lassas}). 
      A new approach to this problem based on the BC-method was developed by Eskin in 
      (\cite{Eskin:approach},\cite{Eskin:approach2}). 
      On a similar note, Stefanov and Uhlmann 
      established stability results for the wave equation in anisotropic 
      media (see \cite{Stefanov:Uhlmann}, \cite{Stefanov:Uhlmann:2} and 
      \cite{Uhlmann} for a survey of these results). 

      In the case where the scalar potential is time-dependent and the vector potential
      is identically equal to zero ($\mathcal{A}\equiv 0$ in (1)), Stefanov 
      \cite{Stefanov} and Ramm-Sj\"ostrand \cite{Ramm:Sjostrand}, have 
      shown that the Dirichlet to Neumann map completely determines the
      scalar potentials. In \cite{Eskin:time:dependent}, Eskin
      considered the case with time-dependent potentials that are analytic in time 
      (this case is more general in terms of the complexity of the PDE but less general 
      with its assumption of analiticity). 
      The analiticity of the time variable is related to the use 
      of a unique continuation theorem established by Tataru in \cite{Tataru}. 
      More recently, the results of \cite{Ramm:Sjostrand,Stefanov} were generalized
      by the author in \cite{Salazar,Salazar2} for the case of vector potentials,
      where it was shown that the Dirichlet-to-Neumann operator determines the 
      vector and scalar potentials up to a gauge transform. 

      Regarding elliptic problems, 
      the questions of stability and reconstruction have been studied for several 
      IBVP 
      (see \cite{Alessandrini,Choulli,DosSantos:Uhlmann,DosSantos:Kenig:Salo,Isakov,Uhlmann} 
      and the references therein).
      For the parabolic case, there are a few results concerning the determination
      of time-dependent coefficients in an IVBP. The case of a source term of 
      the form $f(t)\chi_{D}$, where $\chi_{D}$ is the charachteristic function of
      a known subdomain was considered by Perez-Esteva and Canon in \cite{Chavita} in a
      half line in one dimension. Later in \cite{Chavita2} they considered a similar 
      problem in 3 dimensions. 
      For more references on recent developments on uniqueness and stability estimates
      on elliptic and parabolic PDE's the reader is referred to the books by Isakov \cite{Isakov}
      and Choulli \cite{Choulli}.

      Regarding the stability in the hyperbolic case, the first results were obtained 
      by Isakov in \cite{Isakov:first}.
      Isakov and Sun \cite{Isakov:Sun} obtained estimates for two coefficients of a hyperbolic 
      partial differential equation from all measurements on a part of the lateral boundary. 
      In \cite{Stefanov:Uhlmann} Stefanov and Uhlmann studied the hyperbolic Dirichlet to 
      Neumann map associated to the wave equation in 
      anisotropic media; and in \cite{Stefanov:Uhlmann:2}, they consider the 
      more general case of determining a Riemannian metric on a Riemannian
      manifold with boundary from the boundary measurements. More recently
      in \cite{Montalto}, Montalto recovers the metric, a covector field and 
      a potential from the hyperbolic Dirichlet to Neumann map.

      However, stability in the case of time-dependent vector has not been 
      considered before. In this paper, which is based on \cite{Salazar2,Salazar3}, 
      we take advantage of a result by Begmatov \cite{Begmatov},
      where he proves a stability estimate for a time-dependent scalar 
      function when information about its X-ray transforms is known on a cone. 
      In our work we establish stability estimates for vector and scalar potentials
      when they are compactly supported in space and time. 
      This work is structured as follows. 
    In section \ref{sec:review:prev:article} we review the 
    construction of the Geometric Optics Anzats (GO for short) as well as the Green's 
    formula developed in \cite{Salazar}. This construction is later used to 
    obtain estimates for the X-ray transform along `light rays' of particular combinations 
    of the components of the vector potentials. In section \ref{sec:stability:estimates} a 
    logarithmic stability estimate for vector potentials is established, and finally 
    in section \ref{new:section} 
    we prove an estimate for the case when both vector 
    and scalar potentials are present.

\section{Geometric optics and Green's formula}\label{sec:review:prev:article}

  The following Geometric Optics construction is the same in
  \cite{Salazar}, however, it is included here because some of the details
  will be needed in the estimates in section \ref{sec:stability:estimates}.

  For the hyperbolic problem (\ref{hyp:eqn})-(\ref{hyp:eqn1b}) 
  Geometric Optics Anzats supported near light rays take the form
  \begin{equation}\label{geom:optic:sols}
     u(t,x) = \rme^{\rmi k(t-\omega\cdot x)}
              \sum_{p=0}^{N}\frac{v_{p}(t,x)}{(2\rmi k)^{p}}
              + v^{(N+1)}(t,x),\qquad \omega\in S^{n-1}, k\in\mathbb{R}.
  \end{equation}
  For $u$ as above, equation (\ref{hyp:eqn}) yields (see \cite{Salazar} 
  for the details)
  \begin{equation}\label{hyp:eqn:for:u1}
    0= \big(L +2\rmi k\mathcal{L} \big) v, 
  \end{equation}
  where 
  \begin{align}\label{geom:series:soln}
     v(t,x) & = \sum_{p=0}^{N} \frac{v_{p}(t,x)}{(2ik)^{p}} + \rme^{-\rmi k(t-\omega\cdot x)}v^{(N+1)}(t,x) \\
     L & = \HYPOP{A}{V} \\
     \mathcal{L} & = \TRANSPOP{A},
  \end{align}
   plugging in $v$ into (\ref{hyp:eqn:for:u1}) then gives 
  \begin{align}\nonumber
     (2\rmi k)\mathcal{L}v_{0} & + \big( \mathcal{L}v_{1} + L v_{0} \big) +
     \frac{1}{(2\rmi k)}\big( \mathcal{L}v_{2} + L v_{1} \big) + \dots + \\
     & \frac{1}{(2\rmi k)^{N-1}}\big( \mathcal{L}v_{N} + L v_{N-1} \big) +
     \frac{1}{(2\rmi k)^{N}}L v_{N} + \rme^{-\rmi k(t-\omega\cdot x)}L v^{(N+1)} = 0.
  \end{align}
    To ensure that the previous equation is satisfied, we can use 
    a two-step process. In the first step we solve the $N+1$ transport 
    equations
    \begin{equation}
      \mathcal{L} v_{0} = 0,\qquad \mathcal{L} v_{j}
                        = -L v_{j-1}, \qquad 1\le j \le N
    \end{equation}
    with initial conditions supported in a small neighborhood of a point 
    $(t,x) \in \ERRE{}\times\partial\Omega$,
    and in the second step we solve the second order equation
    \begin{equation}\label{hyp:eqn:after:go:solns}
    L v^{(N+1)} = -\frac{\rme^{\rmi k(t-\omega\cdot x)}}{(2\rmi k)^{N}} L v_{N}
    \end{equation}
    with initial and boundary conditions
    \begin{align*}
     v^{(N+1)}(t,x) & = 0
                    & \text{for }t = T_{1} \quad \phantom{nx\in\partial\Omega}\\
     \partial_{t}v^{(N+1)}(t,x) & = 0
                    & \text{for }t = T_{1} \quad \phantom{nx\in \partial\Omega}\\
     v^{(N+1)}(t,x) & = 0
                    & \text{for }t \ge T_{1}, \quad x\in\partial\Omega.
     \end{align*}

    This differential equation admits a unique solution; moreover if we denote
    by $h$ the right hand side of (\ref{hyp:eqn:after:go:solns}), 
    then for $T_{1}<t<T$ and $k>1$ (see \cite{Isakov}, pp. 185) 
    \begin{align}\nonumber
     ||\partial_{t}v^{(N+1)}(t,\thinspace\cdot)||_{L^{2}(\Omega)} +
     ||v^{(N+1)}(t,\thinspace\cdot)||_{H^{1}(\Omega)}
       & \le C
       ||h||_{L^{2}((T_{1},T)\times\Omega)} \\ \label{power:of:k}
       & \le \frac{C}{k^{N}} .
    \end{align}
    If $v_{0}$ is a solution of the transport equation
    $\mathcal{L}v_{0}(t,x)=0$, it has the form
    \begin{equation}\label{eqn:2.24}
       v_{0}(t,x)=\chi_{1}(t',x')
       \exp \Big[ -\rmi
          \int_{-\infty}^{(t+\omega\cdot x)/2} 
	       \sum_{j=0}^{n} \omega_{j}A_{j}(t^{\prime}+s,x^{\prime}+s\omega)
	       \thinspace\mathrm{d}s
       \Big]
    \end{equation}
    where $(t^{\prime},x^{\prime})=(t,x)-\frac{1}{2}(t+\omega\cdot x)(1,\omega)$ is
    the projection of $(t,x)$ into $\Pi_{(1,\omega)}$, the $n$-dimensional 
    linear subspace perpendicular to $(1,\omega)$ (see figure 1 in \cite{Salazar}), 
    and $\chi_{1}$ is any real valued function that is constant along the direction given 
    by $(1,\omega)$, and whose support is contained in a neighborhood of the light ray 
    $\gamma=\{ (t',x') + s(1,w)\thinspace | \thinspace s\in\ERRE{}\}$. 
 
    It then follows that 
    $u=\rme^{\rmi k(t-\omega\cdot x)}(v_{0}+\mathcal{O}(k^{-1}))$
    solves (\ref{hyp:eqn}) and satisfies the set of initial conditions
    (\ref{hyp:eqn1a}). 
    Summarizing, a GO solution of (\ref{hyp:eqn})-(\ref{hyp:eqn1a}) of the form
    \begin{equation}\label{GO1a}
    u(t,x) = \exp\big[\rmi k(t-\omega\cdot x)- \rmi R_{1}(t,x;\omega)\big]
             \big( \chi_{1}(t',x') + \mathcal{O}(k^{-1}) \big)
    \end{equation}
    can be constructed, where
    \begin{equation}\label{GO1b}
    R_{1}(t,x;\omega) = \int^{(t+\omega\cdot x)/2}_{-\infty}
                 \sum_{j=0}^{n}\omega_{j}A_{j}(t^{\prime}+s,x^{\prime}+s\omega)
             \thinspace\mathrm{d}s.
    \end{equation}
    Similarly, a GO solution for the backwards hyperbolic
    problem can be obtained in the same fashion, with another  
    real valued function $\chi_{2}$ constant along a given light ray.
    
   To obtain a Green's formula for this problem, we let $T_{1}$ and $T_{2}$ be two real 
   numbers with $T_{1}<<0<<T_{2}$, and consider the forward and backward hyperbolic equations
   \begin{center}
   \begin{tabular}{r l c r l}
    $L_{1} u = 0$ &$\quad\text{in }\TT{}\times\Omega$ & $\qquad$&
    $L_{2}^{*} v = 0$ &$\quad\text{in }\TT{}\times\Omega$ \\
    $u = \DT u = 0$ &$\quad\text{for } t = T_{1} $ & $\quad$&
    $v = \DT v = 0$ &$\quad\text{for } t = T_{2} $ \\
    $u=f$           &$\quad\text{on }\TT{}\times\partial\Omega$ & $\quad$ &
    $v=g$           &$\quad\text{on }\TT{}\times\partial\Omega$,
   \end{tabular}
   \end{center}
   where 
   \begin{align*}
      L_{1} 
      & = \big( -i\DT + A_{0}^{(1)}(t,x)\big)^{2} -
          \sum_{j=1}^{n} \big(-i\DXJ{j} + A_{j}^{(1)}(t,x) \big)^{2} +
          V^{(1)}(t,x) \\
      L_{2}^{*} 
      & = \big( -i\DT + \overline{A_{0}^{(2)}(t,x)}\big)^{2} -
          \sum_{j=1}^{n} \big(-i\DXJ{j} + \overline{A_{j}^{(2)}(t,x)} \big)^{2} +
          \overline{V^{(2)}(t,x)}.
   \end{align*}
   If we denote by $\langle\text{ , }\rangle_{\TT{}\times\Omega}$ and
   $\langle\text{ , }\rangle_{\TT{}\times\partial\Omega}$ the $L^{2}$ 
   inner products in $\TT{}\times\Omega$, $\TT{}\times\partial\Omega$;  
   integration by parts applied to 
   \begin{equation*}
      \ELEDOTT{L_{1}u}{v} - \ELEDOTT{u}{L_{2}^{*}v} = 0
   \end{equation*}
   yields the Green's Formula (see \cite{Salazar} for complete details)
   \begin{multline}\label{eqn:GF}
           \ELEDOT{\Lambda_{1}\big(f\big)}{g}_{\TT{}\times\partial\Omega}
         - \ELEDOT{\Lambda_{2}\big(f\big)}{g}_{\TT{}\times\partial\Omega}
         = \\
           \sum_{j=0}^{n} r_{j} \Big(
           \ELEDOTT{A_{j}u}{(-\rmi\DXJ{j} v)} 
	  + \ELEDOTT{A_{j}(-\rmi\DXJ{j} u)}{v}
          \Big) \\ 
           + \sum_{j=0}^{n} r_{j}
           \ELEDOTT{\big( (A_{j}^{(2)} )^{2}
          - (A_{j}^{(1)} )^{2}\big)u}{v} 
       - \ELEDOTT{Vu}{v},
   \end{multline}
   where $x_{0}=t$, $A_{j}=A_{j}^{(2)}-A_{j}^{(1)}$ for
   $0\le j\le n$, $V=V^{(2)}-V^{(1)}$, $r_{0}=-1$, and $r_{j}=1$ for $1\le j\le n$.

\section{Stability of the vector potentials}\label{sec:stability:estimates}

    The proof in this section closely follows \cite{Salazar2}. We 
    assume that the components of the vector potentials
    $\mathcal{A}^{(1)}$ and $\mathcal{A}^{(2)}$ as well as the 
    scalar potentials $V^{(1)}$ and $V^{(2)}$ are real valued, 
    smooth and compactly supported in both $t$ and $x$. We write
    \begin{equation*}
       \mathcal{A} = \mathcal{A}^{(1)} - \mathcal{A}^{(2)} \quad\text{where}\quad
       \mathcal{A}^{(k)} = (A_{0}^{(k)},\dots,A_{n}^{(k)}),\quad k=1,2,
    \end{equation*}
    and as before we denote by $\Pi_{(1,\omega)}$ the
    $n$-dimensional linear subspace perpendicular to $(1,\omega)$. In symbols
    $$\Pi_{(1,\omega)} = \{ (t,x)\, : \, t + \omega\cdot x = 0 \}.$$

   The GO anzats and the Green's formula developed
   in the previous section allow for the estimation
   of the X-ray transform over light rays of particular combinations of the 
   components of the vector potentials. The precise statement is as follows:
   \begin{lemma}\label{lemma:4.1}
      If $\Lambda_{k}$, $k=1,2$ 
      represents the Dirichlet to Neumann operator for the hyperbolic equations
      \begin{equation}\label{hyp:eqn:no:scalar:pot}
      \Big(\big( -\rmi\DT + A^{(k)}_{0}(t,x)\big)^{2}
                            - \sum_{j=1}^{n} \big(-\rmi\DXJ{j} + A^{(k)}_{j}(t,x) \big)^{2}
                          + V^{(k)}(t,x)  \Big) u = 0, 
      \end{equation}
      then for all $(t,x) \in \mathbb{R}^{n+1}_{t,x}, \omega\in S^{n-1},$ 
      the vectorial ray transform of $\mathcal{A} = (A^{(2)}_{0}-A^{(1)}_{0},\dots,
      A^{(2)}_{n}-A^{(1)}_{n})$ along the light rays 
      \begin{equation*}
      \gamma_{(t,x;\omega)}
      =\{ (t,x)+s(1,\omega)\thinspace : s\in\mathbb{R} \},
      \end{equation*}
      satisfies
    \begin{equation}\label{eqn:58}
       \Bigg|
       \exp \Big[ 
              i \int_{-\infty}^{\infty} 
           \big( A_{0} + \sum_{j=1}^{n} \omega_{j}A_{j}
           \big) ( t + s, x + s\omega )
             \thinspace \mathrm{d} s 
            \Big] - 1
       \Bigg| \le
       C\, ||| \Lambda_{1} - \Lambda_{2} |||,
    \end{equation}
      where $|||\phantom{m}|||$ represents the operator norm
      between $H^{1}([T_{1},T_{2}]\times\partial\Omega)$ and
      $ L^{2}([T_{1},T_{2}]\times\partial\Omega)$. 
   \end{lemma}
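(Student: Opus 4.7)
The plan is to apply the Green's identity (\ref{eqn:GF}) to carefully matched GO pairs concentrated along a common light ray and to extract the estimate by tracking the leading order in $k$. Fix $(t_{0},x_{0})\in\ERRE{n+1}$ and $\omega\in S^{n-1}$, and let $\gamma$ be the light ray through $(t_{0},x_{0})$ with direction $(1,\omega)$. Using (\ref{GO1a})--(\ref{GO1b}), I would construct a forward GO solution $u$ of $L_{1}u=0$ with cutoff $\chi_{1}$ supported in a narrow tube around $\gamma$ and phase weight $R_{1}$ built from $\mathcal{A}^{(1)}$, together with a backward GO solution $v$ of $L_{2}^{*}v=0$ with independent cutoff $\chi_{2}$ in the same tube and phase weight $\widetilde R_{2}$ built from $\mathcal{A}^{(2)}$, chosen so that $u$ and $\bar v$ share the same rapid phase $\rme^{\rmi k(t-\omega\cdot x)}$. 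The remainders are $O(k^{-1})$ in $H^{1}$ by (\ref{power:of:k}).

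The next step is to substitute the boundary traces $(f,g)=(u,v)|_{\TT{}\times\partial\Omega}$ into (\ref{eqn:GF}) and isolate the order-$k$ contribution on the right-hand side. Each factor $-\rmi\partial_{x_{j}}$ acting on the rapid phase produces $\pm k\omega_{j}$ (with $\omega_{0}=1$); combined with $r_{0}=-1$, $r_{j}=1$, the first double sum collapses to $-2k\int\big(A_{0}+\sum_{j=1}^{n}\omega_{j}A_{j}\big)\,u\bar v\,\rmd t\,\rmd x$ at leading order, while the $A_{j}^{2}$ and $V$ terms contribute only $O(1)$. Because the rapid phases cancel in $u\bar v$, the product is non-oscillatory at leading order: $u\bar v = \chi_{1}\chi_{2}\,\rme^{\rmi\phi}+O(k^{-1})$, where $\phi$ is determined by $R_{1}$ and $\widetilde R_{2}$ and depends only on the ray parameter $s=(t+\omega\cdot x)/2$. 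By construction $\phi'(s)=\sum_{j=0}^{n}\omega_{j}A_{j}(s)$ along each ray, so the fundamental theorem of calculus telescopes the inner $s$-integral into $\frac{1}{\rmi}\big(\rme^{\rmi\int_{\mathbb{R}}\sum\omega_{j}A_{j}\,\rmd s}-1\big)$, which is precisely the quantity appearing in (\ref{eqn:58}).

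The concluding step is to bound the left side of (\ref{eqn:GF}) by $|||\Lambda_{1}-\Lambda_{2}|||\,\|f\|_{H^{1}}\|g\|_{L^{2}}$, with $\|f\|_{H^{1}}=O(k)$ and $\|g\|_{L^{2}}=O(1)$ (constants depending on $\chi_{i}, R_{i}$ but not on $k$). Shrinking the transversal supports of $\chi_{1},\chi_{2}$ to a bump localized at $(t_{0},x_{0})$ on $\Pi_{(1,\omega)}$, dividing through by the leading factor of $k$, and letting $k\to\infty$ eliminates the $O(1)$ and $O(k^{-1})$ errors, yielding the pointwise estimate at $\gamma$; arbitrariness of $(t_{0},x_{0},\omega)$ finishes the lemma.

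The main obstacle is the $k$-bookkeeping: verifying that every subleading contribution --- derivatives hitting the slowly varying amplitudes $\chi_{i}\rme^{-\rmi R_{i}}$, the corrector $v^{(N+1)}$, and the $V$- and $A_{j}^{2}$-terms --- truly carries a factor $k^{-1}$ relative to the extracted leading term of order $k$. A secondary subtlety is ensuring the localization argument produces a constant $C$ in (\ref{eqn:58}) that is independent of $(t,x,\omega)$; this will in effect require uniform control on the amplitudes $\chi_{i}\rme^{-\rmi R_{i}}$ and their traces as the bumps are translated along $\Pi_{(1,\omega)}$, a point whose verification depends on the compact support hypothesis for $\mathcal{A}^{(k)}$ and $V^{(k)}$.
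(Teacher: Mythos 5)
Your proposal follows essentially the same route as the paper: construct matching forward/backward GO solutions with a common rapid phase, insert their boundary traces into the Green's identity, observe that the derivative terms produce the $O(k)$ leading contribution $-2k\int(A_0+\sum\omega_jA_j)\chi_1\chi_2\,\rme^{\rmi(\overline{R_2}-R_1)}$, bound the left side by $|||\Lambda_1-\Lambda_2|||\,\|f\|_{H^1}\|g\|_{L^2}=O(k)\,|||\Lambda_1-\Lambda_2|||$, divide by $k$, let $k\to\infty$, and then use the Fundamental Theorem of Calculus to telescope the ray integral into the exponential form of \eqref{eqn:58}. The only cosmetic difference is the last localization step: you invoke a shrinking-bump argument, while the paper phrases it as a duality bound (the estimate holds for all $\chi_j$ with $\|\chi_j\|_{L^2(\Pi_{(1,\omega)})}\le 1$, so the inner $\sigma$-integral, viewed as a function of $Y'$, is bounded in $L^\infty$); both are standard and yield the same pointwise estimate, though you should make the $L^2$ normalization of $\chi_j$ explicit when shrinking the supports so that $\chi_1\chi_2$ has $L^1$ norm $\lesssim 1$, otherwise the constant degenerates.
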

   \textbf{Remark: } We point that this result is independent of the presence of scalar 
   potentials. 
   \begin{proof}
      Owing to (\ref{GO1a}) and (\ref{GO1b}), GO Anzats for
      the forward and backward hyperbolic equations
      are given by
      \begin{align}\label{geom:optic:1a}
      u(t,x) & = \exp\Big[\rmi k(t-\omega\cdot x)- \rmi R_{1}(t,x;\omega) \Big]
               \Big( \chi_{1} + \mathcal{O}\left(k^{-1}\right) \Big), \\ \label{geom:optic:1b}
      \overline{v(t,x)} & =
                 \exp\Big[-\rmi k(t-\omega\cdot x) + \rmi \overline{R_{2}(t,x;\omega)}\Big]
               \Big( \chi_{2} + \mathcal{O}\left(k^{-1}\right) \Big),
      \end{align}
      where
      \begin{align}\label{geom:op:r1}
         R_{1}(t,x;\omega)
            & = \int^{(t+\omega\cdot x)/2}_{-\infty}
              \sum_{j=0}^{n} \omega_{j}
              A^{(1)}_{j}(t^{\prime}+s,x^{\prime}+s\omega)
              \thinspace \mathrm{d}s, \\ \label{geom:op:r2}
              \overline{R_{2}(t,x;\omega)}
            & = \int^{(t+\omega\cdot x)/2}_{-\infty}
              \sum_{j=0}^{n} \omega_{j}
              A^{(2)}_{j}(t^{\prime}+s,x^{\prime}+s\omega)
              \thinspace \mathrm{d}s,
      \end{align}
      where $\chi_{1},\chi_{2}$ are constant along, and supported on
      a small neighborhood of the light ray $\gamma_{(t,x;\omega)}$, and where
       $(t',x')$ is the projection of $(t,x)$ into $\Pi_{(1,\omega)}$.

      For $0\le j\le n$, differentiation of \eqref{geom:optic:sols} with
      respect to $x_{j}$ combined with estimate \eqref{power:of:k}
      lead to
      \begin{equation}\label{eqn:derivatives:GOanzats}
         \DXJ{j} u
         = k\exp\Big[\rmi k(t-\omega\cdot x) - \rmi R_{1}(t,x;\omega)\Big]
                \Big(-\rmi r_{j}\omega_{j} \chi_{1} + \mathcal{O}(k^{-1})
                \Big),
      \end{equation}
      where $x_{0}=t$, $\omega_{0}=1$, $r_{0}=-1$ and $r_{j}=1$ when $j\neq 0$.
      Then by (\ref{geom:optic:1b})
      \begin{equation}\label{GO:product:v:du}
         \left(-\rmi\DXJ{j}u(t,x)\right)\overline{v(t,x)}
         = -k\rme^{\rmi\left(\overline{R_{2}(t,x;\omega)}-R_{1}(t,x;\omega)\right)}
           \left( r_{j}\omega_{j}\chi_{1}\chi_{2} + \mathcal{O}(k^{-1}) \right).
      \end{equation}
      Similarly, \eqref{geom:optic:1a} yields
      \begin{equation}\label{GO:product:u:dv}
        u(t,x)\overline{\left(-\rmi\DXJ{j}v(t,x)\right)}
        = -k\rme^{\rmi\left(\overline{R_{2}(t,x;\omega)}-R_{1}(t,x;\omega)\right)}
          \left( r_{j}\omega_{j}\chi_{1}\chi_{2} + \mathcal{O}(k^{-1}) \right).
      \end{equation}

      Denoting by $\mathcal{I}_{\mathrm{R}}$ the right hand side of \eqref{eqn:GF},
      we obtain via the previous two formulas
    \begin{multline*}
         \mathcal{I}_{\mathrm{R}}
          =Ck \int_{T_{1}}^{T_{2}}\int_{\Omega} \sum_{j=0}^{n}
          \Big( A_{0} + \sum_{j=1}^{n} \omega_{j}A_{j}\Big)(t,x) \chi_{1}(t,x)\chi_{2}(t,x) 
          \thickspace \times \\ 
          \exp\Big[\rmi\big(\overline{R_{2}(t,x;\omega)} - R_{1}(t,x;\omega)\big)\Big] 
          \thinspace \rmd x\thinspace \rmd t + \cdots
      \end{multline*}
      which in turn leads to
      \begin{multline}\label{eq:GF:RHS}
         \mathcal{I}_{\mathrm{R}}
           = Ck \int_{T_{1}}^{T_{2}}\int_{\Omega} 
          \Big( A_{0} + \sum_{j=1}^{n} \omega_{j}A_{j}\Big)(t,x) \chi_{1}(t,x)\chi_{2}(t,x) 
                 \thinspace \times \\
                 \rme^{-i \int_{-\infty}^{\frac{1}{2} ( t+\omega\cdot x )} 
                        \big( A_{0} + \sum_{j=1}^{n} \omega_{j}A_{j}\big)(t'+s,x'+s\omega) \thinspace \mathrm{d}s } 
                        \thinspace \mathrm{d}x\thinspace \mathrm{d}t
          + \cdots 
      \end{multline}
      where $C$ is a constant and ``$\cdots$'' represents terms of order $\mathcal{O}(1)$.

      We turn now our attention to the left hand side of
      \eqref{eqn:GF}. Denoting by $f$ and $g$ the restrictions of $u$ and $v$ to
      $[T_{1},T_{2}]\times\partial\Omega$,
      that is
    \begin{equation*}
       f = u(t,x) \big|_{[-T_{1},T_{2}]\times\partial\Omega} \qquad
       g = v(t,x) \big|_{[-T_{1},T_{2}]\times\partial\Omega},
    \end{equation*}
    we have by the Cauchy-Schwarz inequality
    \begin{multline*}
       | \mathcal{I}_{\mathrm{R}} | =
       \big| \ELEDOT{(\Lambda_{1} - \Lambda_{2})(f)}{g}_{[T_{1},T_{2}]\times\partial\Omega} \big| \le
       ||| \Lambda_{1} - \Lambda_{2} ||| \times \\
       \thinspace\thinspace || f ||_{H^{1}([T_{1},T_{2}]\times\partial\Omega)} 
                 \thinspace || g ||_{L^{2}([T_{1},T_{2}]\times\partial\Omega)}.
    \end{multline*}
    Using \eqref{geom:optic:1a} the latter norm can be estimated by
    \begin{align}\nonumber 
       || \thinspace g \thinspace ||_{L^{2}([T_{1},T_{2}]\times\partial\Omega)}
       & =   || \thinspace \chi_{2}(t,x) 
              (1+\mathcal{O}(k^{-1})) \thinspace ||_{L^{2}([T_{1},T_{2}]\times\partial\Omega)} \\ \label{norm:g:L2}
       & \le || \thinspace \chi_{2}(t,x) 
       \thinspace ||_{L^{2}([T_{1},T_{2}]\times\partial\Omega)}
              + \mathcal{O}(k^{-1}),
    \end{align}
    whereas by \eqref{eqn:derivatives:GOanzats} the middle norm can be estimated by
    \begin{align}\nonumber
       || \thinspace f \thinspace ||_{H^{1}([T_{1},T_{2}]\times\partial\Omega)}
       & \le C \Big[
           k \big|\big|\thinspace \chi_{1} 
                \thinspace\big|\big|_{L^{2}([T_{1},T_{2}]\times\partial\Omega)}
         + \mathcal{O}(1) \Big]  \\ \label{norm:f:H1}
       & = Ck \Big[
           \big|\big|\thinspace \chi_{1} 
                \thinspace\big|\big|_{L^{2}([T_{1},T_{2}]\times\partial\Omega)}
           + \mathcal{O}(k^{-1}) \Big].
    \end{align}
    In addition, since $\Omega$ is bounded and $\chi_{j}$, $j=1,2$, is localized
    near a light ray, we have
    $ ||\chi_{j}||_{L^{2}([T_{1},T_{2}]\times\partial\Omega)} \le C $.
    Therefore, by (\ref{norm:g:L2}) and (\ref{norm:f:H1})
    \begin{equation} \label{stab:estim:1}
       | \mathcal{I}_{\mathrm{R}} | \le
       Ck \, \Big[ |||\Lambda_{1}-\Lambda_{2} ||| 
       + \mathcal{O}(k^{-1}) \Big].
    \end{equation}

   Dividing both sides of Green's formula \eqref{eqn:GF} by $k$
   (i.e., \eqref{eq:GF:RHS} and \eqref{stab:estim:1}) and taking the
   limit as $k\to\infty$, we obtain via the triangle inequality and
   the change of coordinates $(t,x) = \sigma(1,\omega) + Y^{\prime}$,
   $Y^{\prime} \in \Pi_{(1,\omega)}$
   \begin{multline}\label{eqn:57}
       \Bigg| \int_{\Pi_{(1,\omega)}} 
       \int_{\mathbb{R}}
          \Big( A_{0} + \sum_{j=1}^{n} \omega_{j}A_{j}\Big)\big(Y^{\prime} + \sigma(1,\omega)\big) 
                \chi_{1}(Y^{\prime})\chi_{2}(Y^{\prime}) \thickspace \times \\
                 \rme^{-i \int_{-\infty}^{\sigma} 
                        \big( A_{0} + \sum_{j=1}^{n} \omega_{j}A_{j}\big)
                              \big(Y^{\prime} + s(1,\omega) \big) \thinspace \mathrm{d}s } 
                        \thinspace \mathrm{d}\sigma\thinspace \mathrm{d}S_{Y^{\prime}}
       \Bigg| \le
       C\, |||\Lambda_{1}-\Lambda_{2} |||. 
    \end{multline}

    If we set
    \begin{equation*}
    a(Y^{\prime}) := 
       \int_{\mathbb{R}}
          \Big( A_{0} + \sum_{j=1}^{n} \omega_{j}A_{j}\Big)\big(Y^{\prime} + \sigma(1,\omega)\big) 
                 \thinspace \rme^{-i \int_{-\infty}^{\sigma} 
                        \big( A_{0} + \sum_{j=1}^{n} \omega_{j}A_{j}\big)
                        \big(Y^{\prime} + s(1,\omega) \big) \thinspace \mathrm{d}s } 
                        \thinspace \mathrm{d}\sigma,
    \end{equation*}
    equation (\ref{eqn:57}) can be rewritten as
    \begin{equation*}
       \Big| \int_{\Pi_{(1,\omega)}} a(Y^{\prime}) \chi_{1}(Y^{\prime})\chi_{2}(Y^{\prime}) 
            \thinspace \mathrm{d}S_{Y^{\prime}} \Big| 
       \le
       C |||\Lambda_{1}-\Lambda_{2} |||.
    \end{equation*}
    The conditions imposed on the support of $\chi_{j}$, $j=1,2$, guarantee that the above estimate holds
    for any $\chi_{j}$ satisfying 
    $\int_{\Pi_{(1,\omega)}} |\chi_{j}(Y^{\prime})|^{2}\mathrm{d}S_{Y^{\prime}} \le 1$,
    thus $a$ is a bounded linear functional on $L^{1}(\Pi_{(1,\omega)})$ and the estimate
    \begin{multline*}
       \Bigg| \int_{-\infty}^{\infty} 
           \big( A_{0} + \sum_{j=1}^{n} \omega_{j}A_{j}
          \big) \big( X^{\prime} + \sigma (1,\omega) \big)
             \thickspace \times \\
            \rme^{i \int_{-\infty}^{\sigma} 
                  ( A_{0} + \sum_{j=1}^{n} \omega_{j}A_{j} )
                 ( X^{\prime} + s (1,\omega))
                \thinspace \mathrm{d} s
               }
             \thinspace \mathrm{d} \sigma
       \Bigg| \le
       C ||| \Lambda_{1} - \Lambda_{2} |||
    \end{multline*}
    holds. To finish the proof, we invoke the Fundamental Theorem of Calculus
    and rewrite the integral in the original coordinate system to obtain
    \begin{equation*}
       \Bigg|
       \exp \Big[ 
             i \int_{-\infty}^{\infty} 
           \big( A_{0} + \sum_{j=1}^{n} \omega_{j}A_{j}
          \big) ( t + s, x + s\omega )
             \thinspace \mathrm{d} s 
            \Big] - 1
       \Bigg| \le
       C ||| \Lambda_{1} - \Lambda_{2} |||.
    \end{equation*}
   \end{proof}
   \begin{cor}\label{cor:4.2}
      Let $\Lambda_{1}$, $\Lambda_{2}$, 
      represent the Dirichlet to Neumann operators for the hyperbolic equations
      \eqref{hyp:eqn:no:scalar:pot}, and let
      \begin{equation*}
          \alpha := 
 	  \mathrm{sup}
 	      \Big| \int_{-\infty}^{\infty} \big( A_{0} + \sum_{j=0}^{n} \omega_{j} A_{j} \big) 
 	        (t+s,x+s\omega) \thinspace\mathrm{d}s \Big|
      \end{equation*}
      where the supremum is taken over  
      $(t,x,\omega)\in[T_{1},T_{2}]\times\Omega\times S^{n-1}$. If $\alpha<2\pi$,
      then for all $(t,x) \in \mathbb{R}^{n+1}_{t,x}, \omega\in S^{n-1}.$ 
    \begin{equation}\label{estim:abs:ray:integrals}
       \Big|
           \int_{-\infty}^{\infty} 
           \big( A_{0} + \sum_{j=1}^{n} \omega_{j}A_{j}
           \big) ( t + s, x + s\omega )
             \thinspace \mathrm{d} s 
       \Big| \le
       C\, ||| \Lambda_{1} - \Lambda_{2} |||,
    \end{equation}
      where $|||\phantom{m}|||$ represents the operator norm
      between $H^{1}([T_{1},T_{2}]\times\partial\Omega)$ 
      and $L^{2}([T_{1},T_{2}]\times\partial\Omega)$.
   \end{cor}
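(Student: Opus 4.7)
The plan is to reduce the corollary to Lemma \ref{lemma:4.1} via the elementary inequality comparing $|e^{i\theta}-1|$ with $|\theta|$ on intervals where $|\theta|$ stays strictly below $2\pi$. Since the components of $\mathcal{A}^{(1)}$ and $\mathcal{A}^{(2)}$ are assumed to be real-valued, the quantity
\[
  I(t,x,\omega) \;:=\; \int_{-\infty}^{\infty} \Big(A_{0} + \sum_{j=1}^{n} \omega_{j} A_{j}\Big)(t+s,x+s\omega)\,\mathrm{d}s
\]
is a real number, and by Lemma \ref{lemma:4.1} we already know $\big|e^{\rmi I(t,x,\omega)}-1\big| \le C\,|||\Lambda_{1}-\Lambda_{2}|||$. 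The whole task is to upgrade this exponential estimate into a linear estimate on $|I|$.

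The key elementary fact is that for any real $\theta$ one has $|e^{\rmi\theta}-1| = 2|\sin(\theta/2)|$, and the function $\sin(x)/x$ is even and strictly decreasing on $[0,\pi]$, hence satisfies $|\sin(x)/x|\ge \sin(\alpha/2)/(\alpha/2)>0$ on $|x|\le \alpha/2$, provided $\alpha/2<\pi$. Applied to $x=\theta/2$ with $|\theta|\le\alpha$, this yields
\[
  |\theta| \;\le\; \frac{\alpha}{2\sin(\alpha/2)}\,\big|e^{\rmi\theta}-1\big|.
\]
The hypothesis $\alpha<2\pi$ is exactly what is needed for the constant $\alpha/(2\sin(\alpha/2))$ to be finite; this is the one place where the sharp threshold enters, and also essentially the only potential obstacle in the argument.

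Applying the inequality pointwise with $\theta = I(t,x,\omega)$, which satisfies $|\theta|\le \alpha<2\pi$ by definition of $\alpha$, and then invoking the bound from Lemma \ref{lemma:4.1}, we conclude
\[
  \Big|\int_{-\infty}^{\infty} \big(A_{0}+\sum_{j=1}^{n}\omega_{j}A_{j}\big)(t+s,x+s\omega)\,\mathrm{d}s\Big|
  \;\le\; \frac{\alpha}{2\sin(\alpha/2)}\cdot C\,|||\Lambda_{1}-\Lambda_{2}|||,
\]
uniformly in $(t,x)\in\mathbb{R}^{n+1}$ and $\omega\in S^{n-1}$, which is \eqref{estim:abs:ray:integrals} with a new constant absorbing $\alpha/(2\sin(\alpha/2))$.
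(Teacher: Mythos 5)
Your proof is correct and follows essentially the same route as the paper: both reduce Corollary \ref{cor:4.2} to Lemma \ref{lemma:4.1} by observing that $|e^{\rmi\beta}-1|/|\beta| = |\sin(\beta/2)|/(\beta/2)$ is bounded away from zero whenever $|\beta|/2 < \alpha/2 < \pi$. Your version merely makes the lower bound (and hence the constant $\alpha/(2\sin(\alpha/2))$) explicit, which the paper leaves implicit.
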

   \begin{proof}
     Denoting by $\beta$ the integral 
     $\int_{-\infty}^{\infty} ( A_{0} + \sum_{j=1}^{n} \omega_{j} A_{j} )  
     (t+s,x+s\omega) \thinspace\mathrm{d}s $, 
     we have 
     \begin{equation} \label{eqn:59}
        \frac{\big| \rme^{i\beta} -1 \big|}{|\beta|} = 
	\frac{| \sin \frac{\beta}{2} |}{ \frac{|\beta|}{2}}.
     \end{equation}
     Since $\dfrac{|\beta|}{2} < \dfrac{\alpha}{2} <\pi $, 
     the right hand side of (\ref{eqn:59}) is bounded from below.
     It then follows that 
    \begin{equation*}
       \Big| \int_{-\infty}^{\infty} 
           ( A_{0} + \sum_{j=1}^{n} \omega_{j}A_{j} ) 
	   ( t + s , x + s\omega )
             \thinspace \mathrm{d} s 
	     \Big| \le C
       \Big| \rme^{ i \int_{-\infty}^{\infty} 
           ( A_{0} + \sum_{j=1}^{n} \omega_{j}A_{j} ) 
	   ( t + s , x + s\omega )
             \thinspace \mathrm{d} s }
             - 1 \Big|,
    \end{equation*}
    which in turn leads to \eqref{estim:abs:ray:integrals}.
   \end{proof}

    To deal with the fact that uniqueness of the vector potentials is 
    expected only up to a gauge transform we impose the divergence 
    condition
    \begin{equation}\label{condition:divergence}
       \mathrm{div}\thinspace\mathcal{A} = 
       \partial_{t}A_{0}(t,x) + \sum_{j=1}^{n} \partial_{x_{j}}A_{j}(t,x) = 0.
    \end{equation}
    By the remark after the definition of gauge equivalent pairs of potentials,
    we know that the difference of vector potentials is a the gradient of a 
    scalar function. The divergence condition then implies that said scalar 
    function must also be harmonic and hence equal to zero by the support 
    conditions imposed on the vector potentials.

    Denoting by $F$ the ray transform of 
    $A_{0} + \sum_{j=1}^{n} \omega_{j} A_{j}$ along 
    light rays $\gamma(t,x;\omega)$, we can rewrite 
    (\ref{estim:abs:ray:integrals}) as
    \begin{equation}\label{estimate:F}
       |F(t,x;\omega)| \le C \,||| \Lambda_{1} - \Lambda_{2} |||
    \end{equation}
    for all $(t,x)\in\ERRE{}_{t}\times\ERRE{n}_{x}$, $\omega\in S^{n-1}$.
    Taking the Fourier transform of $F$ in the variables $x_{1},\dots,x_{n}$ 
    yields 
    \begin{equation*}
     \big(\mathcal{F}_{(x\to\xi)}F(t,\cdot;\omega)\big)(\xi) = 
          \int_{\ERRE{n}} 
          \rme^{-i\xi\cdot x} 
	  \int_{\ERRE{}} \Big(A_{0} + \sum_{j=1}^{n} \omega_{j} A_{j} \Big) 
	                 (t+s,x+s\omega) \thinspace\mathrm{d}s
	              \thinspace \mathrm{d}x,
    \end{equation*}
    and the change of coordinates $\tilde{x}=x+s\omega$, $\tilde{t}=t+s$, with
    Jacobian 
     $\big| \frac{ \partial (\tilde{t},\tilde{x}) }{ \partial (t,x)} \big| = 1$
     leads to
    \begin{align*}
     \big(\mathcal{F}_{(x\to\xi)}F(t,\cdot;\omega)\big)(\xi)
       & = \rme^{-i(\omega\cdot\xi)t} 
          \int_{\ERRE{n}} 
	  \int_{\ERRE{}} 
	                 \rme^{-i\tilde{x}\cdot\xi}
			 \thinspace \rme^{-i(-\omega\cdot\xi)\tilde{t}} 
	                 \Big(A_{0} + \sum_{j=1}^{n} \omega_{j} A_{j} \Big) 
	                 (\tilde{t},\tilde{x}) \thinspace\mathrm{d}\tilde{t}
          \thickspace 
	              \mathrm{d}\tilde{x},
    \end{align*}
    where the right hand side of the above equation is the Fourier
    transform (in all variables) of $A_{0}+\sum_{j=1}^{n}\omega_{j}A_{j}$
    at the point $(-\omega\cdot\xi,\xi)$. The above equation can be rewritten
    as
    \begin{equation*}
	\rme^{it\omega\cdot\xi}
        \big(\mathcal{F}_{(x\to\xi)}F(t,\cdot;\omega)\big)(\xi) =
	\Big(A_{0} + \sum_{j=1}^{n} \omega_{j} A_{j} \Big)^{\wedge} (-\omega\cdot\xi,\xi) 
    \end{equation*}
    and since the right hand side is independent of
    $t$, so is the left hand side. In particular when $t=0$ we have
    \begin{equation}\label{Fourier:trans:indep:t}
	\Big(A_{0} + \sum_{j=1}^{n} \omega_{j} A_{j} \Big)^{\wedge} (-\omega\cdot\xi,\xi) =
        \big(\mathcal{F}_{(x\to\xi)}F(0,\cdot;\omega)\big)(\xi) =:
	G(\xi;\omega).
    \end{equation}
    Since the potentials $A_{j}$ are smooth and compactly supported,
    $F(0,\cdot;\cdot):\ERRE{n}_{x}\times S^{n-1}\to \ERRE{}$
    is also smooth and compactly supported because 
    for $|x|$ big enough, the light rays with direction $(1,\omega)$ 
    emanating from the point $(0,x)$ do not intersect the support of
    the potentials $A_{j}$. 
    Moreover by (\ref{estimate:F}) it is uniformly bounded by
    $C\, ||| \Lambda_{1} - \Lambda_{2} |||$, and
    \begin{align} \nonumber
	|G(\xi ; \omega) |
	& =  \Big|   \int_{\ERRE{n}} \thickspace
	             \rme^{-ix\cdot\xi} \thinspace F(0,x;\omega) \thinspace \mathrm{d}x \Big| \\ \nonumber
        & \le
	|| F(0,\cdot;\cdot)||_{L^{\infty}(\ERRE{n}_{x}\times S^{n-1})} \thinspace 
	\mathrm{Vol}(B_{n}(R)) \\ \label{estim:fourier:complement:cone}
	& \le
	    C R^{n} \thinspace |||\Lambda_{1} - \Lambda_{2} ||| 
    \end{align}
    shows that 
    $G$ is uniformly bounded in $\ERRE{n}_{\xi}\times S^{n-1}$.

   \begin{lemma}\label{lemma:4.3}
      Let $\Lambda_{1}$, $\Lambda_{2}$, 
      represent the Dirichlet to Neumann operators for the hyperbolic equations
      \eqref{hyp:eqn:no:scalar:pot}, and let $\alpha$ be as in corolary \ref{cor:4.2}.
     If $\alpha<2\pi$ and the divergence condition 
     \eqref{condition:divergence} holds, 
     then
    \begin{align} \label{eq:estim:Aj}
       \big| \widehat{A_{j}}(\tau,\xi) \big| 
          & \le 
          C\, |||\Lambda_{1} - \Lambda_{2} |||,\quad 0\le j \le n,
    \end{align}
    on the set 
    $\{(\tau,\xi) \thinspace : \thinspace |\tau| \le \frac{|\xi|}{2}\}$. 
   \end{lemma}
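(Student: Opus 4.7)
The plan is to combine the Fourier-space bound $|G(\xi;\omega)|\le C\,|||\Lambda_1-\Lambda_2|||$ established in \eqref{Fourier:trans:indep:t}--\eqref{estim:fourier:complement:cone}, where $G(\xi;\omega)=(A_0+\sum_j \omega_j A_j)^\wedge(-\omega\cdot\xi,\xi)$, with the Fourier form of the divergence condition \eqref{condition:divergence}, namely $\tau\,\widehat{A_0}(\tau,\xi)+\sum_{j=1}^n \xi_j\,\widehat{A_j}(\tau,\xi)=0$. For a fixed $(\tau,\xi)$ with $\xi\neq 0$ and $|\tau|\le|\xi|/2$, the constraint $-\omega\cdot\xi=\tau$ cuts out a nonempty $(n-2)$-sphere of admissible $\omega\in S^{n-1}$, which I would parameterize as $\omega=-(\tau/|\xi|)\hat\xi+\omega_\perp$ with $\omega_\perp\perp\xi$ and $|\omega_\perp|^2=1-\tau^2/|\xi|^2\ge 3/4$.

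I would then decompose $\vec A(\tau,\xi):=(\widehat{A_1},\dots,\widehat{A_n})(\tau,\xi)=a\hat\xi+\vec A_\perp$ with $\vec A_\perp\perp\xi$, which turns the uniform bound on $G$ into
\[
\bigl|\widehat{A_0}(\tau,\xi)-(\tau/|\xi|)\,a+\omega_\perp\cdot\vec A_\perp\bigr|\le C\,|||\Lambda_1-\Lambda_2|||
\]
valid for every admissible $\omega_\perp$. Applying this estimate to the antipodal pair $\pm\omega_\perp$ and using the triangle inequality separates the $\omega_\perp$-independent part from the linear part, giving both $|\widehat{A_0}-(\tau/|\xi|)a|\le C\,|||\Lambda_1-\Lambda_2|||$ and $|\omega_\perp\cdot\vec A_\perp|\le C\,|||\Lambda_1-\Lambda_2|||$. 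Letting $\omega_\perp$ sweep over a rescaled orthonormal basis of the hyperplane $\xi^\perp$ (the rescaling factor is $\sqrt{1-\tau^2/|\xi|^2}\ge\sqrt 3/2$, hence harmless) then yields $|\vec A_\perp|\le C\,|||\Lambda_1-\Lambda_2|||$.

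To extract the remaining unknowns $\widehat{A_0}$ and $a$, I would close the system with the divergence condition: projected along $\hat\xi$ it reads $\tau\,\widehat{A_0}+|\xi|\,a=0$, so $a=-(\tau/|\xi|)\widehat{A_0}$. Substituting this into the scalar bound gives $|\widehat{A_0}|(1+\tau^2/|\xi|^2)\le C\,|||\Lambda_1-\Lambda_2|||$, whence $|\widehat{A_0}|\le C\,|||\Lambda_1-\Lambda_2|||$, then $|a|\le(|\tau|/|\xi|)|\widehat{A_0}|\le C\,|||\Lambda_1-\Lambda_2|||$, and finally each component $|\widehat{A_j}(\tau,\xi)|$, $0\le j\le n$, is bounded by $C\,|||\Lambda_1-\Lambda_2|||$ on the cone.

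The main subtlety to monitor is quantitative uniformity of the constants. The threshold $|\tau|\le|\xi|/2$ is precisely what keeps the radius $\sqrt{1-\tau^2/|\xi|^2}$ of the admissible $\omega_\perp$-sphere bounded away from zero, which in turn keeps the sphere-sweeping step and the algebraic inversion using the divergence condition well-conditioned; without such a restriction the argument degenerates as $|\tau|\to|\xi|$, where the sphere of admissible $\omega$ collapses to a point. The degenerate locus $\xi=0$ is handled by continuity, since the $A_j$ are smooth and compactly supported.
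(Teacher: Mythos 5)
Your proof is correct, and it takes a genuinely different route from the paper's. The paper forms an $(n+1)\times(n+1)$ linear system $M(\tau,\xi)\widehat{\mathcal{A}}(\tau,\xi) = (G(\xi;\omega^{(1)}),\dots,G(\xi;\omega^{(n)}),0)^{\mathsf T}$ using $n$ carefully placed ray directions (vertices of a regular polygon inscribed in the admissible sphere) together with the divergence constraint, proves $M$ invertible by citing Theorem~3.4 of \cite{Salazar} for the homogeneous kernel, and then bounds $|M^{-1}|$ via a cofactor expansion together with a geometric lower bound $|\det M|\ge V\sin(\pi/8)$ on the normalized set. You instead avoid the matrix machinery entirely: you orthogonally decompose $\vec A = a\hat\xi + \vec A_\perp$, exploit the antipodal symmetry $\omega_\perp\mapsto -\omega_\perp$ of the admissible cone directions to separate the constant part $\widehat{A_0}-(\tau/|\xi|)a$ from the linear part $\omega_\perp\cdot\vec A_\perp$, sweep $\omega_\perp$ over a rescaled orthonormal basis of $\xi^\perp$ to control $\vec A_\perp$, and then use the divergence relation $a=-(\tau/|\xi|)\widehat{A_0}$ to close the system, with the threshold $|\tau|\le|\xi|/2$ guaranteeing the radius $\sqrt{1-\tau^2/|\xi|^2}\ge\sqrt3/2$ stays bounded away from zero. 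What your approach buys is transparency and self-containment: the constants are tracked explicitly, the well-posedness of the ``inversion'' is visible in elementary terms rather than hidden in a determinant estimate, and you need no appeal to an external characterization of the null space. What the paper's approach buys is a formulation that scales naturally if one had more or fewer constraints, since the invertibility/conditioning question reduces to a single determinant. Your treatment of the degenerate locus $\xi=0$ by continuity of $\widehat{A_j}$ is also a point the paper passes over silently, and it is a worthwhile remark.
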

   \begin{proof}
    Proceeding as in the proof of theorem 3.3 in \cite{Salazar} 
    (see also \cite{Salazar2}), 
    for $(\tau,\xi)$ fixed with $|\tau|<\frac{1}{2}|\xi|$ we 
    can find unit vectors $\omega = \omega(\tau,\xi)$ parametrized by 
    an $(n-2)$-dimensional sphere with radius $r$, $\frac{\sqrt{3}}{2} \le r \le 1$,
    (we denote it by $rS^{n-2}$), satisfying
    $\tau + \omega(\tau,\xi)\cdot\xi = 0$, 
    as well as $\omega(\theta\tau,\theta\xi) =  \omega(\tau,\xi)$  
    for $\theta > 0$. In other words, we can find $\omega(\tau,\xi)$ homogenous
    of degree $0$ in $(\tau,\xi)$, such that 
    $(\tau,\xi) \perp \big(1,\omega(\tau,\xi)\big)$.
    If $n\ge 3$, we consider a maximal one dimensional sphere with radious $r$
    contained in $rS^{n-2}$ and choose unit vectors
    $\omega^{(1)}(\tau,\xi),\dots,\omega^{(n)}(\tau,\xi)$ 
    forming the vertices of a regular polygon with $n$ sides. If $n=2$
    we let $\omega^{(1)}(\tau,\xi)$ and $\omega^{(2)}(\tau,\xi)$ be the
    only two elements of $rS^{0}$. In both cases we then
    study the set of $n+1$ equations
    \begin{equation}\label{system:inhomo}
    \left\{
       \begin{aligned}
       \widehat{A_{0}}(\tau,\xi) + 
          \sum_{j=1}^{n} \omega_{j}^{(k)}(\tau,\xi)\widehat{A_{j}}(\tau,\xi) 
	  & = G\big(\xi;\omega^{(k)}(\tau,\xi)\big) ,
	  \quad\quad k =1,\dots,n \\
       \frac{1} {\sqrt{\tau^{2}+|\xi|^{2}}} \Big( \tau \widehat{A_{0}}(\tau,\xi) 
       + 
          \sum_{j=1}^{n} 
       \xi_{j}\widehat{A_{j}}(\tau,\xi) \Big) & = 0,
       \end{aligned}
    \right.
    \end{equation}
    where the last equation is a simple consequence of the divergence 
    condition \eqref{condition:divergence}.
    The left hand side of \eqref{system:inhomo} can be expressed as
    $ M(\tau,\xi)\hat{\mathcal{A}}(\tau,\xi)$,
    where 
    \begin{equation*}
    M(\tau,\xi) = 
       \begin{pmatrix} 
           1 & \omega_{1}^{(1)}(\tau,\xi) & \dots & \omega_{n}^{(1)}(\tau,\xi) \\
           1 & \omega_{1}^{(2)}(\tau,\xi) & \dots & \omega_{n}^{(2)}(\tau,\xi) \\
           \hdotsfor[2]{4}\\
           1 & \omega_{1}^{(n)}(\tau,\xi) & \dots & \omega_{n}^{(n)}(\tau,\xi) \\
           \frac{\tau}{\sqrt{\tau^{2}+|\xi|^{2}}} & 
           \frac{\xi_{1}}{\sqrt{\tau^{2}+|\xi|^{2}}} 
	   & \dots & 
           \frac{\xi_{n}}{\sqrt{\tau^{2}+|\xi|^{2}}} 
       \end{pmatrix}
    \end{equation*}
    has homogeneous entries of degree $0$ in $(\tau,\xi)$. We claim that 
    $M(\tau,\xi)$ is invertible. To prove this statement it suffices to show 
    that the homogeneous system
    \begin{equation}\label{system:homo}
    \left\{
       \begin{aligned}
       \widehat{A_{0}}(\tau,\xi) + 
          \sum_{j=1}^{n} \omega_{j}^{(k)}(\tau,\xi)\widehat{A_{j}}(\tau,\xi) 
	  & = 0, 
	  \quad\quad k =1,\dots,n \\
       \frac{1} {\sqrt{\tau^{2}+|\xi|^{2}}} \Big( \tau \widehat{A_{0}}(\tau,\xi) 
       + 
          \sum_{j=1}^{n} 
       \xi_{j}\widehat{A_{j}}(\tau,\xi) \Big) & = 0,
       \end{aligned}
    \right.
    \end{equation}
    has no non-trivial solution. By theorem 3.4 in \cite{Salazar} 
    (see also \cite{Salazar2}), 
    potentials satisfying the first $n$ equations are those of the form
    \begin{align*}
    \widehat{A_{0}}(\tau,\xi) = & \tau\, \Phi(\tau,\xi), \\
    \widehat{A_{j}}(\tau,\xi) = & \xi_{j}\, \Phi(\tau,\xi),\quad 1\le j \le n,
    \end{align*}
    for some smooth function $\Phi$. The last equation in 
    \eqref{system:homo}
    gives $\Phi(\tau,\xi)\sqrt{\tau^{2} + |\xi|^{2}} = 0$, which in turn
    leads to $\Phi \equiv 0$, and $\widehat{\mathcal{A}} = 0$. 

    Since $M(\tau,\xi)$ is invertible we can write
    \begin{equation*}
       \widehat{A_{j}}(\tau,\xi)  = 
          \sum_{k=1}^{n} c_{k,j}(\tau,\xi) G\big(\xi;\omega^{(k)}(\tau,\xi)\big),\quad
	   1\le k \le n,\thickspace 0 \le j \le n,
    \end{equation*}
    for some $c_{k,j}(\tau,\xi)$ homogeneous of degree $0$ in $(\tau,\xi)$. 
    It follows then that 
    \begin{align}\nonumber
       \big| \widehat{A_{j}}(\tau,\xi) \big| 
          & \le 
          \sum_{k=1}^{n} \big| c_{k,j}(\tau,\xi)  \big| 
	                 \big| G\big(\xi;\omega^{(k)}(\tau,\xi)\big) \big| \\ \label{non:uniform}
          & \le 
          C\, |||\Lambda_{1} - \Lambda_{2} ||| \sum_{k=1}^{n} \big| c_{k,j}(\tau,\xi)  \big|, 
    \end{align}
    where in the last line of the previous inequality we used the uniform 
    bound (\ref{estim:fourier:complement:cone}).

    In view of the homogeneity of the functions $c_{k,j}(\tau,\xi)$
    it suffices to work on the compact set
    $\{(\tau,\xi) \thinspace : \thinspace \tau^{2}+|\xi|^{2}=1,
    \thickspace|\tau| \le \frac{|\xi|}{2}\}$.
    The entries of the inverse matrix of $M(\tau,\xi)$ have
    the form
    \begin{equation*}
       c_{k,j}(\tau,\xi) = \frac{1}{\det M(\tau,\xi)} \mathrm{C}_{j,k}(\tau,\xi)
    \end{equation*}
    where $\mathrm{C}_{j,k}(\tau,\xi)$ is the $(j,k)$-cofactor of $M(\tau,\xi)$.
    Since the 
    entries of $M(\tau,\xi)$ have absolute value less or equal to one, 
    and since $\mathrm{C}_{j,k}(\tau,\xi)$ consists of sums of products of 
    $n$ such entries, we have
    \begin{equation*}
       | c_{k,j}(\tau,\xi) | 
       \le \frac{| \mathrm{C}_{j,k}(\tau,\xi) |}{| \det M(\tau,\xi) |} 
       \le \frac{n}{| \det M(\tau,\xi) |}.
    \end{equation*}

    The quantity $|\det M(\tau,\xi)|$ represents the $(n+1)$-dimensional
    volume generated by the vectors 
    $\{(1,\omega^{(1)}(\tau,\xi)),\dots,(1,\omega^{(n)}(\tau,\xi)),(\tau,\xi) \}$.
    Due to our choice of 
    $ \omega^{(1)}(\tau,\xi),\dots, \omega^{(n)}(\tau,\xi)$ 
    this volume does not depend on the point $(\tau,\xi)$.
    Moreover, 
    $|\det M(\tau,\xi)| = V \times \mathrm{P}(\tau,\xi)$ 
    where $\mathrm{P}(\tau,\xi)$ is the projection of $(\tau,\xi)$
    into the linear subspace generated by the set of vectors
    $\{(1,\omega^{(1)}(\tau,\xi)),\dots,(1,\omega^{(n)}(\tau,\xi))\}$ 
    and $V$ is the $n$-dimensional volume generated by these vectors.
    This projection is given by $C\sin \varphi$ where $\varphi$
    is the angle between $(\tau,\xi)$ and said subspace. Since
    the vectors $(1,\omega^{(k)}(\tau,\xi))$, $1 \le k \le n$, 
    are located in the boundary of the light cone   
    $\{(\tau,\xi) \thinspace : \thinspace |\tau| \ge |\xi| \}$,
    this angle is bounded below by $\frac{\pi}{8}$. Therefore
    the value
    $|\det M(\tau,\xi)|$ is uniformly bounded from below by
    $\mathrm{V} \sin \frac{\pi}{8}$ on 
    $\{(\tau,\xi) \thinspace : \thinspace \tau^{2}+|\xi|^{2}=1,
    \thickspace|\tau| \le \frac{|\xi|}{2}\}$.
    Hence
    \begin{equation*}
       | c_{k,j}(\tau,\xi) | 
       \le \frac{n}{V \sin \frac{\pi}{8}},
    \end{equation*}
    and by (\ref{non:uniform}) we obtain the uniform estimate
    \begin{align*} 
       \big| \widehat{A_{j}}(\tau,\xi) \big| 
          & \le C \, |||\Lambda_{1} - \Lambda_{2} ||| 
    \end{align*}
    on the set 
    $\{(\tau,\xi) \thinspace : \thinspace |\tau| \le \frac{|\xi|}{2}\}$. 
   \end{proof}
    The following statement is a result about harmonic measures,
    its proof can be found in \cite{Begmatov}.
    \begin{lemma}\label{lemma:4.4}
    Consider the strip 
    \begin{equation*}
    S = \{ z = z_{1} + iz_{2} \thinspace: \thinspace z_{1}\in\ERRE{}, 
           |z_{2}|< 2|\tau_{0}|\pi, \tau_{0}\neq 0\}
    \end{equation*}
    and the rays
    \begin{equation*}
     p_{1} = \{ z \thinspace: \thinspace -\infty < z_{1} \le -2|\tau_{0} |, z_{2}=0\}, \qquad
     p_{2} = \{ z \thinspace: \thinspace 2|\tau_{0}| \le z_{1} < \infty, z_{2}=0\} 
    \end{equation*}
    in the complex plane $\mathbb{C}$. \\
    If $E = p_{1} \cup p_{2}$ and $G=S\setminus E$ is the 
    strip with cuts along the rays $p_{1}$ and $p_{2}$, we have
    \begin{equation}\label{estim:harmon:measure}
       \frac{2}{3} < \varpi(z,E,G) \le 1,
    \end{equation}
    where $\varpi(z,G,E)$ is the harmonic measure of $E$ with respect to $G$. 
    More precisely
        \begin{equation}\label{eqn:Poisson:int}
           \varpi(\zeta) = 
                         \frac{1}{\pi} \int_{-\infty}^{\infty}
                         \chi_{E'}(t)\frac{\zeta_{2}}{(t-\zeta_{1})^{2}+\zeta_{2}^{2}}
                         \thinspace\mathrm{d}t, 
        \end{equation}
    where $\chi_{E'}(t)$ is the characteristic function of the set 
    $E' = \{ t \in \mathbb{R} \, : \, |t| \le 1 \} \cup \{ t \in \mathbb{R} \, : \, |t| > \rme \}$.
    \end{lemma}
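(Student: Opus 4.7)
Proof Proposal:

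The plan is to use conformal invariance of harmonic measure: transfer the problem to the upper half plane $\mathbb{H} = \{\zeta_2 > 0\}$, where harmonic measure admits the explicit Poisson representation, then read off the bound. This simultaneously yields the formula \eqref{eqn:Poisson:int} and the two-sided estimate \eqref{estim:harmon:measure}.

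First, I would normalize by the conformal dilation $z \mapsto z/(2|\tau_0|)$, reducing to the case $|\tau_0| = 1/2$ with $S = \{|z_2| < \pi\}$ and cuts $p_1 = (-\infty, -1]$, $p_2 = [1,\infty)$. Next, I would construct a biholomorphism $\Phi : G \to \mathbb{H}$ in three stages. The exponential $w = e^z$ maps $S$ onto $\mathbb{C} \setminus (-\infty, 0]$, sending both strip boundaries $z_2 = \pm \pi$ to the two sides of the negative real axis and carrying the cuts $p_1, p_2$ onto $(0,e^{-1}]$ and $[e,\infty)$ on the positive real axis. An affine rescaling then normalizes the gap $(e^{-1}, e)$ onto $(-1,1)$. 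Finally, a Joukowski-type transformation identifies the resulting slit plane $\mathbb{C} \setminus (\mathbb{R} \setminus (-1,1))$ with $\mathbb{H}$. Tracking the boundary arcs through this composition, $E$ distributes on $\partial \mathbb{H} = \mathbb{R}$ as exactly $E' = \{|t| \le 1\} \cup \{|t| > e\}$, while the strip boundary (not in $E$) corresponds to $\mathbb{R} \setminus E' = (-e,-1) \cup (1,e)$.

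With $\Phi$ in hand, conformal invariance gives $\varpi(z, E, G) = \varpi(\Phi(z), E', \mathbb{H})$, and the Poisson representation of bounded harmonic functions on $\mathbb{H}$ with Dirichlet data $\chi_{E'}$ yields precisely \eqref{eqn:Poisson:int}. The upper bound $\varpi \le 1$ is immediate since $\chi_{E'} \le 1$ and the Poisson kernel integrates to $1$. For the lower bound I would write
\begin{equation*}
    1 - \varpi(\zeta) = \frac{1}{\pi} \int_{\mathbb{R} \setminus E'} \frac{\zeta_2}{(t-\zeta_1)^2 + \zeta_2^2}\, \mathrm{d}t,
\end{equation*}
interpret this as the normalized angle subtended by the two short arcs $(-e,-1) \cup (1,e)$ at $\zeta \in \mathbb{H}$, and bound it above by $1/3$ using an elementary $\arctan$ computation on the relevant region (e.g., the image of the central segment $-1 < z_1 < 1$ of $G$, which lands on the upper unit semicircle). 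This gives $\varpi > 2/3$.

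The principal obstacle is the explicit construction of $\Phi$ and the careful bookkeeping to confirm that the specific numerical values $1$ and $e$ appearing in $E'$ emerge correctly from the endpoints $\pm 1$ of the cuts under the composite map; this is sensitive to the normalization in Step~1 and to branch choices at each stage. Once the map and $E'$ are pinned down, the Poisson formula is automatic and the bound reduces to plane geometry, as detailed in \cite{Begmatov}.
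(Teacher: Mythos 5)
The paper itself offers no proof of Lemma~\ref{lemma:4.4}: it simply cites Begmatov.  So there is nothing in the paper to compare your argument against directly; what I can do is check your reconstruction, and there is a concrete error in the chain of conformal maps you propose.

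Your overall strategy (map $G$ conformally to $\mathbb{H}$, use the Poisson representation, then estimate $1-\varpi$) is the right one, and the first step $w=\rme^{z}$ is correct: after normalizing by $z\mapsto z/(2|\tau_0|)$, it carries $S$ to the doubly slit plane $\mathbb{C}\setminus\big((-\infty,\rme^{-1}]\cup[\rme,\infty)\big)$, with the slit tips at $\rme^{\pm1}$, the cuts $p_1,p_2$ going to $(0,\rme^{-1}]$ and $[\rme,\infty)$ (both sides), and the two strip boundaries going to the two sides of $(-\infty,0)$.  The problem is your next step.  You propose an \emph{affine} rescaling that moves the gap $(\rme^{-1},\rme)$ onto $(-1,1)$, followed by a Joukowski-type map to $\mathbb{H}$.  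This cannot produce $E'=\{|t|\le 1\}\cup\{|t|>\rme\}$.  After the affine step the six distinguished prime ends of $\partial G$ sit at the slit tips $\pm1$, at $\infty$ (two prime ends), and at the interior breakpoint $-\coth 1$ (two prime ends, where the image of $\partial S$ meets the image of $p_1$ on the slit $(-\infty,-1]$).  Pushing these through an $\arcsin$-plus-exponential Joukowski-type map to $\mathbb{H}$, one finds the breakpoints land at $\{0,\infty,\pm1,\tanh\frac12,\coth\frac12\}$, and the image of $E$ becomes something of the form $(-\infty,0)\cup(\tanh\frac12,\coth\frac12)$ — not symmetric about the origin and not of the stated form.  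The defect is structural: an affine map keeps both slit tips at finite points, whereas for the claimed $E'$ the slit tips must go to $0$ and $\infty$ (they are the fixed points where the two ``sides'' of a slit meet, and in $E'$ those meeting points are precisely $0$ and $\infty$, each interior to a component of $E'$).

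The map that actually works replaces your affine rescaling by the M\"obius transformation
\begin{equation*}
  \mu(w)=\frac{w-\rme}{w-\rme^{-1}},
\end{equation*}
which sends the slit tips $\rme,\rme^{-1}$ to $0,\infty$, carries the doubly slit plane onto $\mathbb{C}\setminus[0,\infty)$, and preserves the half planes.  Composing with the branch of $\sqrt{\,\cdot\,}$ taking $\mathbb{C}\setminus[0,\infty)$ to $\mathbb{H}$ gives $\Phi(z)=\sqrt{(\rme^{z}-\rme)/(\rme^{z}-\rme^{-1})}$; tracking the six boundary arcs one obtains exactly $\Phi(E)=(-\infty,-\rme]\cup(-1,1)\cup[\rme,\infty)$ and $\Phi(\partial S)=(-\rme,-1)\cup(1,\rme)$, i.e.\ the stated $E'$.  (As a check, $\Phi$ intertwines $z\mapsto-z$ with $\zeta\mapsto -\rme/\zeta$, so $\Phi(0)=\rmi\sqrt{\rme}$ and the segment between the slit tips maps onto the positive imaginary axis, where one computes $1-\varpi(\rmi\rho)=\tfrac{2}{\pi}\big(\arctan(\rme/\rho)-\arctan(1/\rho)\big)$, maximized at $\rho=\sqrt{\rme}$ with value $\tfrac{2}{\pi}(2\arctan\sqrt{\rme}-\pi/2)\approx0.306<1/3$, giving $\varpi>2/3$.)  Your closing estimate via the subtended-angle interpretation is fine once the correct $E'$ is in hand, but the map you wrote down does not deliver it, and the discrepancy is not a mere ``bookkeeping'' detail — the affine step has to be replaced by a M\"obius one that sends a slit tip to $\infty$.
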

     
    We now perform a rotation in $\xi$ space
    to make any given vector $(\tau,\xi)=(\tau,\xi_{1},\dots,\xi_{n-1},\xi_{n})$ have 
    the representation $(\tau,0,\dots,0,\nu)$. Based on the 
    previous statements we want to `embed' the $\nu$-axis into a strip in the 
    complex plane and use the bounds developed in the previous lemma.
   \begin{lemma}\label{lemma:4.5}
      Let $\Lambda_{1}$, $\Lambda_{2}$, 
      represent the Dirichlet to Neumann operators for the hyperbolic equations
      \eqref{hyp:eqn:no:scalar:pot}, and let $\alpha$ be as in corolary \ref{cor:4.2}.
     If $\alpha<2\pi$ and the divergence condition 
     \eqref{condition:divergence} holds,
     then on the set 
    $\{(\tau,\xi) \thinspace : \thinspace |\tau| > \frac{|\xi|}{2}\}$ we have 
    \begin{equation} \label{estim:fourier:cone}
    |\widehat{A_{j}}(\tau,\xi)|
        \le C \, \frac{ \rme^{ \frac{2|\tau|a}{3} } 
	|||\Lambda_{1}-\Lambda_{2}|||^{\frac{2}{3} }  }
	{ |\tau|^{\frac{1}{3}} },
    \end{equation}
    where $a$ is some positive number bigger than 
     the diameter of $\Omega$. 
   \end{lemma}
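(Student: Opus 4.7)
The plan is to extend the estimate of Lemma \ref{lemma:4.3}, which controls $\widehat{A_j}$ only on the cone $\{|\tau|\le|\xi|/2\}$, to the complementary cone $\{|\tau|>|\xi|/2\}$ via a Hadamard-type two-constants interpolation on a slit strip in a single complex variable. After the rotation in $\xi$ space that brings the frequency to the form $(\tau,0,\dots,0,\nu)$, it suffices to control
\begin{equation*}
   f(z) := \widehat{A_j}(\tau,0,\dots,0,z),\qquad z\in\mathbb{C},
\end{equation*}
for real $\nu$ with $|\nu|<2|\tau|$. Since $A_j$ is smooth and supported in a set of diameter at most $a$, the Paley-Wiener theorem makes $f$ an entire function satisfying $|f(z)|\le C_N(1+|\tau|+|z|)^{-N}\rme^{a|\mathrm{Im}\,z|}$ for every $N\in\mathbb{N}$.

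I would then apply the two-constants theorem to $\log|f|$ on the slit strip $G=S\setminus E$ of Lemma \ref{lemma:4.4} with $\tau_0:=\tau$. The boundary of $G$ splits into the cuts $E=p_1\cup p_2$ and the horizontal lines $\{\mathrm{Im}\,z=\pm 2\pi|\tau|\}$. On the cuts any point $z=z_1\in\mathbb{R}$ satisfies $|z_1|\ge 2|\tau|$, so $(\tau,0,\dots,0,z_1)$ lies in the region covered by Lemma \ref{lemma:4.3}, giving
\begin{equation*}
   |f(z)|\le M_1 := C\,|||\Lambda_1-\Lambda_2|||,\qquad z\in E.
\end{equation*}
On the horizontal boundary, where $|\mathrm{Im}\,z|=2\pi|\tau|$, Paley-Wiener with $N=1$ yields
\begin{equation*}
   |f(z)|\le M_2 := C\,\frac{\rme^{2\pi a|\tau|}}{|\tau|}.
\end{equation*}
The maximum principle for $\log|f|$ on $G$, together with the Paley-Wiener decay that kills any contribution from infinity along $S$, produces the Hadamard inequality
\begin{equation*}
   |f(\nu)|\le M_1^{\varpi(\nu,E,G)}\,M_2^{1-\varpi(\nu,E,G)},\qquad |\nu|<2|\tau|.
\end{equation*}
Since $\varpi(\nu,E,G)\ge \tfrac{2}{3}$ by (\ref{estim:harmon:measure}), and since (\ref{estim:fourier:cone}) is trivial when $M_1\ge M_2$, this gives
\begin{equation*}
   |f(\nu)|\le M_1^{2/3}M_2^{1/3}\le C\,\frac{\rme^{2\pi a|\tau|/3}}{|\tau|^{1/3}}\,|||\Lambda_1-\Lambda_2|||^{2/3},
\end{equation*}
which is (\ref{estim:fourier:cone}) after absorbing the factor $\pi$ into $a$, which is permissible since $a$ is only required to dominate $\mathrm{diam}\,\Omega$.

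I expect the main obstacle to be the careful analytic justification of the two-constants step on the slit strip: verifying that $\log|f|$ is subharmonic on $G$, that the Paley-Wiener decay along $S$ is strong enough that no flux escapes at infinity, and that the harmonic-measure representation (\ref{eqn:Poisson:int}) indeed governs $\log|f|$ at interior points. Once these analytic facts are in place, the bookkeeping of the exponents is forced: the bound $\varpi\ge\tfrac{2}{3}$ is precisely what converts the $|\tau|^{-1}$ decay on the strip boundary (secured by taking $N=1$ in Paley-Wiener) into the $|\tau|^{-1/3}$ decay claimed in the lemma, and it distributes the ``good'' factor $|||\Lambda_1-\Lambda_2|||$ with the optimal exponent $\tfrac{2}{3}$.
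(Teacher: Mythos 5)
Your proposal is correct and follows the same strategy as the paper: rotate so the frequency is $(\tau,0,\dots,0,\nu)$, analytically extend in $\nu$, apply the two-constants theorem on the slit strip of Lemma~\ref{lemma:4.4}, using Lemma~\ref{lemma:4.3} for the bound on the cuts and compact support of $A_j$ for the bound on the horizontal boundary lines, then combine with $\varpi\ge 2/3$. The only minor difference is how you produce the factor $|\tau|^{-1}$ in the strip-boundary bound $M_2$: you invoke Paley--Wiener with $N=1$ and use $|z|\ge|\mathrm{Im}\,z|$, whereas the paper obtains $|\tau_0|^{-1}$ by directly integrating $\int_{-\tilde a}^{\tilde a}\rme^{2|\tau_0|\pi x_n}\,\mathrm{d}x_n$ against the partial Fourier transform $W_j$ of the compactly supported $A_j$; the two computations are equivalent. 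Your choice $\tau_0=\tau$ is also a cleaner normalization than the paper's evaluation at $2\tau_0$ with the strip parameter $\tau_0$, and it ensures that the cuts $|z_1|\ge 2|\tau|$ sit exactly in the region $\{|\tau|\le|\xi|/2\}$ where Lemma~\ref{lemma:4.3} applies. Your parenthetical treatment of the case $M_1\ge M_2$ (where the two-constants exponent works against you) is also correct, since then $|f|\le M_2\le M_1^{2/3}M_2^{1/3}$ directly; the paper leaves this case implicit.
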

   \begin{proof}

    Since the potentials $A_{j}$,
    $0\le j \le n$, are 
    compactly supported, the functions
    $  \widehat{A_{j}} (\tau_{0},0,\dots,0,\nu) $ 
    admit an analytic extension in $\nu$ into the complex plane. Letting 
    \begin{gather*}
    \Pi = \{ \nu = (\nu_{1},\nu_{2}) \thinspace: \thinspace \nu_{1}\in\ERRE{}, 
           |\nu_{2}|< 2|\tau_{0}|\pi, \thinspace \tau_{0} \neq 0 \}, \\
     q_{1} = \{ \nu = (\nu_{1},\nu_{2}) \thinspace: \thinspace -\infty < \nu_{1} \le -2|\tau_{0} |, \nu_{2}=0\}, \\
     q_{2} = \{ \nu = (\nu_{1},\nu_{2}) \thinspace: \thinspace 2|\tau_{0}| \le \nu_{1} < \infty, \nu_{2}=0\} 
    \end{gather*}
    and restricting the potentials to the $\nu$-axis, 
    (\ref{estim:harmon:measure}) leads to
    \begin{equation*}
       \frac{2}{3} < \varpi(\nu,E_{1},G_{1}) \le 1,
    \end{equation*}
    where $E_{1} = q_{1} \cup q_{2}$ and $G_{1}=\Pi\setminus E_{1}$.
    Denoting by  $ v_{j}(\nu) = \widehat{A_{j}}
       (2\tau_{0},0,\dots,0,\nu), $
    the above restriction 
    we have by the two-constant theorem (see \cite{Krantz} Theorem 9.4.5)
    \begin{equation}\label{two:constant:ineq}
        |v_{j}(\nu) | \le m_{j}^{\frac{2}{3}} M_{j}^{\frac{1}{3}} 
    \end{equation}
    where $m_{j}$ and $M_{j}$ are the respective upper bounds of the modulus of
    $v(\nu)$ on the rays $q_{1}$ and $q_{2}$ and on the union of the lines
    $q^{\prime}_{1}=\{ (\nu_{1},\nu_{2}) \thinspace : \thinspace \nu_{1}\in\ERRE{}, \nu_{2} = -2|\tau_{0}|\pi \}$
    and
    $q^{\prime}_{2}=\{ (\nu_{1},\nu_{2}) \thinspace : \thinspace \nu_{1}\in\ERRE{}, \nu_{2} =  2|\tau_{0}|\pi \}$.
    We point out that the rays $q_{1}$ and $q_{2}$ are contained in the set 
    $\{(\tau,\xi) \thinspace:\thinspace |\tau | \le \frac{|\xi|}{2} \}$ 
    and that \eqref{eq:estim:Aj} provides and estimate 
    for $|v_{j}(\nu)|$ in that region. 
    To compute $M_{j}$ we resort to the equality
    \begin{equation*}
       v_{j}(\nu) =  \frac{1}{2\pi} \int_{\ERRE{}} 
              \rme^{-i (\nu_{1}+i\nu_{2}) x_{n}} 
	      W_{j}(2\tau_{0},0,\dots,0,x_{n}) \thinspace \mathrm{d}x_{n}
    \end{equation*}
    where $W_{j}$ is the Fourier transform of $A_{j}$ in all variables except $x_{n}$.
    These functions are compactly supported in $x_{n}$ and the above
    integrand is nonzero only on a bounded subset of the real numbers. Hence
    on $q^{\prime}_{1} \cup q^{\prime}_{2}$
    \begin{equation*}
       |v_{j}(\nu) | \le \frac{1}{2\pi} \mathrm{sup}_{x_{n}\in(-a(\Omega),a(\Omega))} 
                          | W_{j}(2\tau_{0},0,\dots,0,x_{n}) |
                     \int_{-\tilde{a}(\Omega)}^{\tilde{a}(\Omega)} 
		     \rme^{2|\tau_{0}| \pi x_{n}} \mathrm{d}x_{n},
    \end{equation*}
    where $\tilde{a}$ is a positive number bigger than $\mathrm{diam}(\Omega)$.
    Integration in $x_{n}$ then leads to
    \begin{equation*}
       |v_{j}(\nu) | \le C \frac{\rme^{2|\tau_{0}|a}}{|\tau_{0}|}
    \end{equation*}
    where $\nu \in q^{\prime}_{1} \cup q^{\prime}_{2}$ and $a=\tilde{a}\pi$.
    Therefore, when 
    $\nu$ is a real number satisfying $-2|\tau_{0}| < \nu < 2|\tau_{0}|$ we have by
    (\ref{two:constant:ineq})
    \begin{equation*}
       |v_{j}(\nu) | \le C \frac{\rme^{\frac{2|\tau_{0}|a}{3} } 
                         m_{j}^{\frac{2}{3} }}{|\tau_{0}|^{\frac{1}{3}} }.
    \end{equation*}
    The above arguments work for any line contained in the hyperplane 
    $\tau = \tau_{0}$ that passes through the origin. 
    Hence by (\ref{eq:estim:Aj}), for 
    $\{ |\tau| > \frac{|\xi}{2} \}$ we have
    \begin{equation*} 
    |\widehat{A_{j}}(\tau,\xi)|
        \le C\, \frac{ \rme^{ \frac{2|\tau|a}{3} } 
	|||\Lambda_{1}-\Lambda_{2}|||^{\frac{2}{3} }  }
	{ |\tau|^{\frac{1}{3}} }.
    \end{equation*}
   \end{proof}
     We can now
     establish the desired stability estimate for the vector potentials. 
     The general idea is to use the inequality 
    $||f||_{L^{\infty}} \le C\,|| \widehat{f} \thinspace ||_{L^{1}}$ 
    and partition $\ERRE{}_{\tau}\times\ERRE{n}_{\xi}$ in 
    an appropriate way.
    \begin{thm}\label{stability:theorem}
       Suppose that the vector and scalar potentials 
       $\mathcal{A}^{(l)}=(A^{(l)}_{0},\dots,A^{(l)}_{n})$, 
       $V^{(l)}$, $l=1,2,$ 
       are real valued, compactly supported and 
       $C^{\infty}$ in $t$ and $x$. 
       Let $ \mathcal{A} = (A_{0},A_{1},\dots,A_{n})$
       where $ A_{j} = A_{j}^{(1)} - A_{j}^{(2)} $ 
       and suppose that the following divergence condition holds
       \begin{equation*}
          \mathrm{div}\thinspace\mathcal{A} = 
	     \partial_{t}A_{0}(t,x) + 
	     \sum_{j=1}^n{} \partial_{x_{j}}A_{j}(t,x) = 0,
       \end{equation*}
       and that the entries of the vector potential satisfy
      \begin{equation*}
	  \mathrm{sup} 
	     \Big| \int_{-\infty}^{\infty} \big( A_{0} + \sum_{j=0}^{n} \omega_{j} A_{j} \big) 
	           (t+s,x+s\omega) \thinspace\mathrm{d}s \Big| < 2\pi, 
       \end{equation*}
       where the supremum is taken over $(t,x;\omega) \in [T_{1},T_{2}]\times\Omega\times S^{n-1}$.

       If $\Lambda_{l}$ represents the Dirichlet to Neumann
       operator associated to the hyperbolic problem (1)-(4), 
       then the stability estimate 
    \begin{equation}\label{eqn:four:stars}
       \max_{0\le j \le n}\Big|\Big| 
       A^{(1)}_{j}(t,x) - A^{(2)}_{j}(t,x) 
       \Big|\Big|_{L^{\infty}(\ERRE{}_{t}\times\ERRE{n}_{x})}
       \le C\,\Bigg[ \log 
             \frac{ 1 }
	          { ||| \Lambda_{1} - \Lambda_{2} |||  }
       \Bigg]^{-1}
    \end{equation}
    holds for $\Lambda_{1}$, $\Lambda_{2}$ satisfying $|||\Lambda_{1} - \Lambda_{2} ||| << 1$.
    \end{thm}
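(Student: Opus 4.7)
The plan is to reduce the $L^{\infty}$ estimate of $A_{j}$ to an $L^{1}$ estimate of $\widehat{A_{j}}$ via the elementary inequality
$\|A_{j}\|_{L^{\infty}(\mathbb{R}^{n+1})} \le (2\pi)^{-(n+1)} \|\widehat{A_{j}}\|_{L^{1}(\mathbb{R}^{n+1})}$,
and then to control the $L^{1}$ norm of $\widehat{A_{j}}$ by splitting frequency space into three regions whose contributions are estimated with three different tools. First I would fix a radius $\rho > 0$ (to be chosen later) and partition $\mathbb{R}_{\tau}\times\mathbb{R}^{n}_{\xi}$ into the ball $B_{\rho}=\{\tau^{2}+|\xi|^{2}\le \rho^{2}\}$ and its complement, and inside $B_{\rho}$ further split according to whether $|\tau|\le|\xi|/2$ or $|\tau|>|\xi|/2$.

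On the piece $B_{\rho}\cap\{|\tau|\le|\xi|/2\}$ I would apply Lemma \ref{lemma:4.3} to get a pointwise bound of order $|||\Lambda_{1}-\Lambda_{2}|||$, integrating over a volume of order $\rho^{n+1}$. On $B_{\rho}\cap\{|\tau|>|\xi|/2\}$ I would apply Lemma \ref{lemma:4.5}; the critical feature is the factor $\rme^{2|\tau|a/3}$, which is bounded by $\rme^{2\rho a/3}$ on this region, and integration over the volume of order $\rho^{n+1}$ produces a contribution of order $\rho^{n+1}\,\rme^{2\rho a/3}\,|||\Lambda_{1}-\Lambda_{2}|||^{2/3}$. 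Finally, on $B_{\rho}^{c}$ I would exploit the fact that the $A_{j}$ are smooth and compactly supported, so that $\widehat{A_{j}}$ is Schwartz: for every $M>0$, there exists $C_{M}$ with $|\widehat{A_{j}}(\tau,\xi)|\le C_{M}(1+|(\tau,\xi)|)^{-(n+1+M)}$, yielding a tail bound of order $C_{M}\rho^{-M}$.

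Collecting the three contributions gives
\begin{equation*}
\|\widehat{A_{j}}\|_{L^{1}} \;\le\; C\,\rho^{n+1}\,|||\Lambda_{1}-\Lambda_{2}||| \;+\; C\,\rho^{n+1}\,\rme^{2\rho a/3}\,|||\Lambda_{1}-\Lambda_{2}|||^{2/3} \;+\; C_{M}\,\rho^{-M}.
\end{equation*}
Writing $\epsilon=|||\Lambda_{1}-\Lambda_{2}|||$ and choosing $\rho=\lambda\log(1/\epsilon)$ with $\lambda<1/a$ makes the middle term of the form $[\log(1/\epsilon)]^{n+1}\epsilon^{2(1-\lambda a)/3}$, which for $\epsilon\ll 1$ is dominated by any negative power of the logarithm; the first term is likewise dominated. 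The tail term then becomes $C_{M}\lambda^{-M}[\log(1/\epsilon)]^{-M}$, and taking $M=1$ (valid since $A_{j}$ is Schwartz) yields the desired bound $C[\log(1/\epsilon)]^{-1}$, uniformly in $j$.

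The main obstacle, and the only delicate point beyond bookkeeping, is the balancing in the second region: the exponential factor $\rme^{2\rho a/3}$ forces $\rho$ to grow at most like $\log(1/\epsilon)$, and it is precisely this logarithmic constraint on the cut-off that converts the polynomial Schwartz decay of $\widehat{A_{j}}$ in the high-frequency tail into the logarithmic modulus of continuity in \eqref{eqn:four:stars}. The hypothesis $\alpha<2\pi$ is essential because Lemmas \ref{lemma:4.3} and \ref{lemma:4.5} both rely on Corollary \ref{cor:4.2}, and the divergence condition is what makes the matrix $M(\tau,\xi)$ in Lemma \ref{lemma:4.3} invertible, so these hypotheses are invoked only implicitly at this final step.
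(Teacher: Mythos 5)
Your proposal is correct and follows essentially the same route as the paper: Fourier inversion, the same three-region frequency decomposition, the smooth-compactly-supported tail decay, and Lemmas \ref{lemma:4.3} and \ref{lemma:4.5} on the two conical pieces. The only variation is cosmetic: the paper balances the $1/\rho$ tail against the $\rho^{n+2/3}\rme^{2\rho a/3}|||\Lambda_{1}-\Lambda_{2}|||^{2/3}$ term by solving the implicit equation \eqref{eqn:three:stars} and then invoking $\log\rho\le\rho$, whereas you set $\rho=\lambda\log(1/\epsilon)$ with $\lambda<1/a$ explicitly; both choices produce the same $[\log(1/\epsilon)]^{-1}$ bound (and the $\rho^{n+1}$ versus $\rho^{n+2/3}$ discrepancy from dropping the $|\tau|^{-1/3}$ factor in Lemma \ref{lemma:4.5} is immaterial to the conclusion).
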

    \begin{proof}
    Let $\alpha$ be as in corolary \ref{cor:4.2}. Since $\alpha < 2\pi$,
    from the Fourier inversion formula we have
    \begin{equation}\label{eqn:star}
       A_{j}(t,x) = 
          \frac{1}{(2\pi)^{n+1}} \iint_{\ERRE{}_{\tau}\times\ERRE{n}_{\xi}} 
          \rme^{i(t\tau + x\cdot\xi)} \widehat{A_{j}}(\tau,\xi)
	  \thinspace\mathrm{d}\tau\mathrm{d}\xi.
    \end{equation}
    Taking absolute values we have for $\rho>0$
    \begin{align*}
       \big| A_{j}(t,x) \big|
       & \le \thinspace \frac{1}{(2\pi)^{n+1}} \iint_{\ERRE{}_{\tau}\times\ERRE{n}_{\xi}} 
            \big| \thinspace 
	    \widehat{A_{j}}(\tau,\xi)
	    \thinspace\big|
	    \thinspace\mathrm{d}\tau\mathrm{d}\xi \\
       & \le \thinspace \frac{1}{(2\pi)^{n+1}} \iint_{B(\rho_{1})} 
            \big| \thinspace 
	    \widehat{A_{j}}(\tau,\xi)
	    \thinspace\big|
	    \thinspace\mathrm{d}\tau\mathrm{d}\xi \\
       & \phantom{\le} + \thinspace \frac{1}{(2\pi)^{n+1}} \iint_{B(\rho_{1})^{c}} 
            \big| \thinspace 
	    \widehat{A_{j}}(\tau,\xi)
	    \thinspace\big|
	    \thinspace\mathrm{d}\tau\mathrm{d}\xi \\
      & = I_{1} + I_{2},
    \end{align*}
    where $B(\rho)$ denotes the $(n+1)$-dimensional ball
    $B(\rho)= \{ (\tau,\xi) \thinspace : \thinspace 
    |\tau|^{2} + |\xi|^{2} \le \rho^{2}\}$.
    Since for $0\le j \le n$, the potentials $A_{j}$, 
    are $C_{0}^{\infty}$ in $t$ and $x$, for 
    any $\beta > 0$, $\rho_{1}>0$, 
    if $|\tau|^{2} + |\xi|^{2} \ge \rho_{1}^{2}$ we have
    \begin{equation*}
       \big| 
       \widehat{A_{j}}(\tau,\xi)
       \big| 
       \le \frac{C}
       { (|\tau|^{2} + |\xi|^{2} )^{\frac{\beta}{2}} },
    \end{equation*}
    where $C$ depends on the derivatives of $A_{j}(t,x)$ up 
    to order $\beta$. When $\beta > n+1$, the integral $I_{2}$ converges. 
    Moreover, when $\beta > n+2$ and $\rho > 1 $, the following estimate holds
    \begin{equation}\label{estability:one:over:rho}
       I_{2} = \iint_{B(\rho)^{c}} 
       \big| 
       \widehat{A_{j}}(\tau,\xi)
       \big| 
          \mathrm{d}\tau\mathrm{d}\xi
       \le \frac{C}
       { \rho^{\beta - n - 1} }
       \le \frac{C}
       { \rho }.
    \end{equation}
    To estimate $I_{1}$ we break up the ball $B(\rho)$ into two smaller pieces
    \begin{equation*}
     \mathcal{C}_{1} = 
     B(\rho) \cap \Big\{ (\tau,\xi) \thinspace: \thinspace |\tau| < \frac{|\xi|}{2} \Big\}
     \quad \text{and} \quad
     \mathcal{C}_{2} = 
     B(\rho) \cap \Big\{ (\tau,\xi) \thinspace: \thinspace |\tau| \ge \frac{|\xi|}{2} \Big\}.
    \end{equation*}
    Then 
    \begin{equation*}
       I_{1} \le 
       \iint_{\mathcal{C}_{1}} 
       \big| 
          \widehat{A_{j}}(\tau,\xi)
       \big| 
          \mathrm{d}\tau\mathrm{d}\xi +
       \iint_{\mathcal{C}_{2}} 
       \big| 
          \widehat{A_{j}}(\tau,\xi)
       \big| 
          \mathrm{d}\tau\mathrm{d}\xi,
    \end{equation*}
    and since $\mathcal{C}_{1}$ is a subset of $B(\rho)$ we have
    \begin{equation*}
       I_{1} \le C \rho^{n+1} ||| \Lambda_{1} - \Lambda_{2} ||| +
       \iint_{C_{2}} 
       \big| 
          \widehat{A_{j}}(\tau,\xi)
       \big| 
          \mathrm{d}\tau\mathrm{d}\xi.
    \end{equation*}
    With this decomposition, $\mathcal{C}_{2}$ is contained in the
    set $\{(\tau,\xi) \thinspace:\thinspace |\tau | > \frac{|\xi|}{2} \}$. 
    Thus by (\ref{estim:fourier:cone})
    \begin{equation}\label{estability:two:constant}
       I_{2} \le C \rho^{n+1} ||| \Lambda_{1} - \Lambda_{2} ||| +
       C'\thinspace 
             \rme^{\frac{2\rho a}{3}} 
             ||| \Lambda_{1} - \Lambda_{2} |||^{\frac{2}{3}} \rho^{n+\frac{2}{3}}.
    \end{equation}
    Equations (\ref{eqn:star})-(\ref{estability:two:constant}) 
    lead to
    \begin{align}\label{eqn:two:starsone} 
       \big| 
          A_{j}(t,x)
       \big| 
       & \le \thinspace C\, \Big[
             \frac{1}{\rho} + \rho^{n+1} ||| \Lambda_{1} - \Lambda_{2} ||| + 
             \rho^{n+\frac{2}{3}} \thinspace \rme^{\frac{2\rho a}{3}} 
	            ||| \Lambda_{1} - \Lambda_{2} |||^{\frac{2}{3}} 
	     \Big] 
    \end{align}
    The rest of the proof is fairly standard. First we seek to impose a 
    condition on  $||| \Lambda_{1} - \Lambda_{2} |||$ so that the
    the third term in the right hand side of (\ref{eqn:two:starsone}) 
    dominates the second one. This can be done by simple minimization
    in $\rho$ of the function $\frac{\rme^{2\rho a}}{\rho}$ over the interval 
    $[1,+\infty)$. If $a<\frac{1}{2}$ we want 
    $||| \Lambda_{1} - \Lambda_{2} ||| < 2ae$ and if $a\ge\frac{1}{2}$ 
    then $||| \Lambda_{1} - \Lambda_{2} ||| < \rme^{2a}$. In both cases, 
    if $||| \Lambda_{1} - \Lambda_{2} ||| << 1$ then 
    \begin{align}\label{eqn:two:stars} 
       \big| 
          A_{j}(t,x)
       \big| 
       & \le \thinspace C\, \Big[
             \frac{1}{\rho} + 
                    \rho^{n+\frac{2}{3}} \thinspace \rme^{\frac{2\rho a}{3}} 
	            ||| \Lambda_{1} - \Lambda_{2} |||^{\frac{2}{3}} 
	     \Big].
    \end{align}
    The next step is to choose $\rho$ so that the two terms in the the right 
    hand side of (\ref{eqn:two:stars}) 
    are comparable. In other words we want $\rho$ to satisfy the identity
    \begin{equation*}
       \frac{C}{\rho} = 
              \rho^{n+\frac{2}{3}} \thinspace \rme^{\frac{2\rho a}{3}} 
	            ||| \Lambda_{1} - \Lambda_{2} |||^{\frac{2}{3}} 
    \end{equation*}
    for some constant $C$. Taking logarithms on both sides of the previous 
    equation yields the following equivalent identity
    \begin{equation}\label{eqn:three:stars}
       2 \log \frac{C} { \thinspace ||| \Lambda_{1} - \Lambda_{2} ||| } 
       = (3n+5) \log \rho + 2a\rho,
    \end{equation}
    where the right hand side of (\ref{eqn:three:stars}) is one to one when
    $\rho > 0 $ and hence it admits a unique solution.
    On the other hand, the inequality $\log\rho \le \rho$ for positive $\rho$ 
    as well as (\ref{eqn:three:stars}) lead to
    \begin{equation*}
       2 \log \frac{ C }
	         { \thinspace ||| \Lambda_{1} - \Lambda_{2} ||| } 
       \le (3n+5 + 2a)  \rho, 
    \end{equation*}
    or
    \begin{equation*}
       \frac{1}{\rho} \le 
             \frac{3n+5+2a}{2} \Bigg[ \log 
             \frac{ C }
	          { ||| \Lambda_{1} - \Lambda_{2} |||  }
       \Bigg]^{-1},
    \end{equation*}
    and (\ref{eqn:two:stars}) becomes
    \begin{equation*} 
       \big| 
          A_{j}(t,x)
       \big| 
       \le C''\Bigg[ \log 
             \frac{ C' }
	          { ||| \Lambda_{1} - \Lambda_{2} |||  }
       \Bigg]^{-1}
       \le C\Bigg[ \log 
             \frac{ 1 }
	          { ||| \Lambda_{1} - \Lambda_{2} |||  }
       \Bigg]^{-1},
    \end{equation*}
    where $C$ depends on $n$, $\Omega$ and derivatives of $A_{j}(t,x)$ for 
    $0\le j \le n$.
    \end{proof}

\section{Stability of the scalar potentials}\label{new:section}
   In this section we establish a log-log type estimate for the scalar 
   potentials. We point out that the estimate from theorem 
   \ref{stability:theorem} is independent of the scalar potentials.
   This is because the term involving the difference of said potentials
   is not the leading term in the assympotics \eqref{eq:GF:RHS}
   and it does not survive the process of dividing by $k$ and taking the
   limit as $k\to +\infty$. In the following lines, we reuse the 
   techniques developed in the previous sections while following closely
   the ideas from Isakov and Sun in \cite{Isakov:Sun}.

    \begin{thm}\label{stability:theorem:2}
       Suppose that the vector and scalar potentials
       $\mathcal{A}^{(l)}=(A^{(l)}_{0},\dots,A^{(l)}_{n})$,
       $V^{(l)}$, $l=1,2,$
       are real valued, compactly supported and
       $C^{\infty}$ in $t$ and $x$.
       Let $V = V^{(1)} - V^{(2)}$, $ \mathcal{A} = (A_{0},A_{1},\dots,A_{n})$,
       where $ A_{j} = A_{j}^{(1)} - A_{j}^{(2)} $
       and suppose that the following divergence condition holds
       \begin{equation*}
          \mathrm{div}\thinspace\mathcal{A} = 
             \partial_{t}A_{0}(t,x) + 
             \sum_{j=1}^n{} \partial_{x_{j}}A_{j}(t,x) = 0,
       \end{equation*}
       and that the entries of the vector potential satisfy
      \begin{equation*}
          \mathrm{sup} 
             \Big| \int_{-\infty}^{\infty} \big( A_{0} + \sum_{j=0}^{n} \omega_{j} A_{j} \big) 
                   (t+s,x+s\omega) \thinspace\mathrm{d}s \Big| < 2\pi, 
       \end{equation*}
       where the supremum is taken over $(t,x;\omega) \in [T_{1},T_{2}]\times\Omega\times S^{n-1}$.

       If $\Lambda_{l}$ represents the Dirichlet to Neumann
       operator associated to the hyperbolic problem (1)-(4),
       then for $\Lambda_{1}$, $\Lambda_{2}$ satisfying 
       $|||\Lambda_{1} - \Lambda_{2} ||| << 1$, the following 
       stability estimates hold
    \begin{align*}
       \big|\big|\big| 
       \, \mathcal{A} \,
       \big|\big|\big|_{0}
       & \le C\,\Big( \log 
             \frac{ 1 }{|||\Lambda_{1} - \Lambda_{2} |||}
       \Big)^{-1} , \\
       \big|\big| \, V \,
       \big|\big|_{L^{\infty}(\ERRE{}_{t}\times\ERRE{n}_{x})}
       & \le C\,\Big( \log \big( \log 
             \frac{ 1 }{|||\Lambda_{1} - \Lambda_{2} |||} \big)
       \Big)^{-1} , 
    \end{align*}
    where 
   \begin{equation*}
      ||| \,\mathcal{A}\, |||_{0} =
      ||| \mathcal{A}^{(1)} - \mathcal{A}^{(2)} |||_{0} 
      := \max_{0\le j \le n} || 
         A^{(1)}_{j}(t,x) - A^{(2)}_{j}(t,x) 
         ||_{L^{\infty}(\ERRE{}_{t}\times\ERRE{n}_{x})}.
   \end{equation*}
    \end{thm}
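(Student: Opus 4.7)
The first estimate for $\mathcal{A}$ in the statement is nothing more than Theorem \ref{stability:theorem}. Indeed, the proof given in section \ref{sec:stability:estimates} never used $V^{(1)}=V^{(2)}$: the scalar-potential contribution $-\langle Vu,v\rangle$ appearing in the Green's formula \eqref{eqn:GF} is of order $\mathcal{O}(1)$, strictly smaller than the $\mathcal{O}(k)$ term from which the vectorial ray transform is extracted, and it vanishes in the limit after dividing by $k$. My plan is therefore to open the proof by invoking Theorem \ref{stability:theorem} verbatim for the vector-potential bound, and then devote the rest of the argument to the scalar potential.

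The plan for the scalar-potential bound is to follow the Isakov--Sun philosophy \cite{Isakov:Sun} of extracting subleading information from Green's formula once the leading order is under control. I would enrich the GO ansatz to $u=\rme^{\rmi k(t-\omega\cdot x)}(v_{0}+v_{1}/(2\rmi k)+\mathcal{O}(k^{-2}))$, where $v_{1}$ solves $\mathcal{L}v_{1}=-Lv_{0}$, and refine the backward solution $v$ analogously. Plugging these into \eqref{eqn:GF} produces a two-term asymptotic: the $\mathcal{O}(k)$ contribution reproduces the vectorial ray transform of Lemma \ref{lemma:4.1}, while the $\mathcal{O}(1)$ contribution decomposes, after multiplication by the overall phase $\rme^{\rmi(\overline{R_{2}}-R_{1})}$, into a weighted light-ray integral of $V$ against $\chi_{1}\chi_{2}$, plus quadratic contributions in the components of $\mathcal{A}^{(\ell)}$, plus non-local remainders coming from the $v_{1}$ correctors. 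Subtracting the leading $\mathcal{O}(k)$ piece (whose value is now known up to a term controlled by $|||\Lambda_{1}-\Lambda_{2}|||$) and letting $k\to\infty$ without dividing by $k$ yields a bound of the form
\begin{equation*}
\big|F_{V}(t,x;\omega)\big|\le C\,|||\Lambda_{1}-\Lambda_{2}|||+C\,|||\mathcal{A}|||_{0},
\end{equation*}
where $F_{V}$ denotes a weighted light-ray transform of $V$. Applying the first estimate of the present theorem to the right-hand side gives
\begin{equation*}
\big|F_{V}(t,x;\omega)\big|\le C\Big[\log\tfrac{1}{|||\Lambda_{1}-\Lambda_{2}|||}\Big]^{-1}.
\end{equation*}

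From here I would feed this bound into exactly the Fourier machinery built in Lemmas \ref{lemma:4.3}--\ref{lemma:4.5}: a Fourier transform in $x$ recovers $\widehat{V}$ on the cone $|\tau|\le|\xi|/2$, the two-constant theorem of Lemma \ref{lemma:4.4} extends the bound to the complementary region with the expected exponential weight in $|\tau|$, and the splitting of $\ERRE{}_{\tau}\times\ERRE{n}_{\xi}$ into $B(\rho)$ and its complement followed by optimization in $\rho$, exactly as in Theorem \ref{stability:theorem}, yields the final $L^{\infty}$ bound on $V$. The only difference is that the input size is now $[\log(1/|||\Lambda_{1}-\Lambda_{2}|||)]^{-1}$ instead of $|||\Lambda_{1}-\Lambda_{2}|||$, so the optimization wraps one additional logarithm around the estimate, producing the advertised double-log rate.

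The main obstacle is the clean extraction of a genuine X-ray transform of $V$ from the subleading term in Green's formula. The correctors $v_{1}$ are defined by integration along $(1,\omega)$, so they drag non-local contributions of both $\mathcal{A}^{(\ell)}$ and $V^{(\ell)}$ along the ray; one must show that after organising by conjugate phases, every contribution not proportional to $V(t,x)\chi_{1}\chi_{2}$ can either be identified with the $\mathcal{O}(k)$ term already treated in Lemma \ref{lemma:4.1} or absorbed into a remainder of size $|||\mathcal{A}|||_{0}$. This bookkeeping, together with verifying that the smoothness and compact-support hypotheses needed to apply Lemma \ref{lemma:4.5} uniformly in $V$ persist through the transport equations, is the technical core of the proof; no extra smallness assumption on $V$ is required, since $V$ enters \eqref{eqn:GF} linearly rather than through an exponential phase.
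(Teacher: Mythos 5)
Your overall strategy matches the paper's (invoke Theorem~\ref{stability:theorem} verbatim for $\mathcal{A}$, use Green's formula with the GO ansatz to control the light-ray transform of $V$, then rerun the Fourier/harmonic-measure machinery), but the central step of your argument for $V$ contains a genuine error. After rearranging Green's formula \eqref{eqn:GF} so that the $V$ term sits alone, you cannot ``let $k\to\infty$'' to obtain $|F_{V}| \le C|||\Lambda_{1}-\Lambda_{2}||| + C|||\mathcal{A}|||_{0}$. The Dirichlet-to-Neumann term contributes an upper bound of size $Ck\,|||\Lambda_{1}-\Lambda_{2}|||$ (since $||f||_{H^{1}}$ grows like $k$), and the $A_{j}$ derivative terms likewise contribute $Ck\,|||\mathcal{A}|||_{0}$. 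Neither of these is exactly computable, so ``subtracting the leading $\mathcal{O}(k)$ piece'' leaves residuals that still scale like $k$ and blow up as $k\to\infty$. No amount of enriching the GO ansatz with the corrector $v_{1}$ fixes this: the obstruction is that $\Lambda_{1}-\Lambda_{2}$ is only bounded, not known, so the $\mathcal{O}(k)$ contributions cannot be cancelled.

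The paper resolves this by \emph{not} sending $k\to\infty$; instead it keeps $k$ finite and, after bounding each term on the right-hand side of the rearranged Green's formula, arrives at
\begin{equation*}
   \Big| \int_{-\infty}^{\infty} V(t+s,x+s\omega)\,\mathrm{d}s \Big|
   \le C_{1}k\big(|||\mathcal{A}|||_{0} + |||\Lambda_{1}-\Lambda_{2}|||\big)
       + C_{2}|||\mathcal{A}|||_{0} + \frac{C_{3}}{k},
\end{equation*}
then optimizes by choosing $k=\big(|||\Lambda_{1}-\Lambda_{2}|||+|||\mathcal{A}|||_{0}\big)^{-1/2}$, which balances the growing and decaying terms and yields a half-power bound $C\big(|||\mathcal{A}|||_{0}^{1/2}+|||\Lambda_{1}-\Lambda_{2}|||^{1/2}\big)$ rather than the linear bound you claim. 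Since $|||\mathcal{A}|||_{0}\lesssim(\log\frac{1}{|||\Lambda_{1}-\Lambda_{2}|||})^{-1}$ by Theorem~\ref{stability:theorem}, the input to the Fourier/two-constants step is $(\log\frac{1}{|||\Lambda_{1}-\Lambda_{2}|||})^{-1/2}$ rather than your $(\log\frac{1}{|||\Lambda_{1}-\Lambda_{2}|||})^{-1}$. This exponent discrepancy does not change the final double-logarithmic rate, so your end result is correct, but the intermediate claim and the $k\to\infty$ mechanism you propose to justify it are not; the balancing-in-$k$ argument is the missing idea. A smaller point: for $V$ there is no divergence condition, so the machinery of Lemmas~\ref{lemma:4.3}--\ref{lemma:4.5} is actually simpler here because no linear system needs to be inverted --- the Fourier transform of the ray transform directly recovers $\widehat{V}$ on the cone $|\tau|\le|\xi|/2$.
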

    \begin{proof}
   In view of our previous results, it is enough to obtain a uniform 
   estimate for the X-ray transform along light rays of the difference 
   of the scalar potentials.
   By theorem \ref{stability:theorem},
   for arbitrary smooth compactly supported scalar potentials 
   $V^{(1)}\neq V^{(2)}$, we have 
   \begin{equation}\label{estimate:short}
      ||| \,\mathcal{A}\, |||_{0} 
      \le C \Big( \log \frac{1}{|||\Lambda_{1} - \Lambda_{2} |||} \Big)^{-1}.
   \end{equation}
   Green's formula \eqref{eqn:GF} with $u$ and $v$ solutions 
   of the forward and backward hyperbolic problem respectively, as well 
   as the triangle inequality give
   \begin{multline}\label{Greens:with:triangle}
      \big| \ELEDOTT{(V^{(1)}-V^{(2)})u}{v} \big| \le 
      \big| \ELEDOT{(\Lambda_{1}-\Lambda_{2})u}{v}_{\TT{}\times\partial\Omega} \big| \\
          + \sum_{j=0}^{n} \big| \ELEDOTT{(A^{(1)}_{j}-A^{(2)}_{j})u}{(-\rmi\DXJ{j} v)} \big| \\
          + \sum_{j=0}^{n} \big| \ELEDOTT{(A^{(1)}_{j}-A^{(2)}_{j})(-\rmi\DXJ{j} u)}{v} \big| \\ 
          + \sum_{j=0}^{n} \big| \ELEDOTT{\big[ (A_{j}^{(2)} )^{2}
          - (A_{j}^{(1)} )^{2}\big]u}{v} \big| .
   \end{multline}
   When $u$ and $v$ are given by the GO anzats developed in section 
   \ref{sec:review:prev:article}, the 
   discussion of the assymptotics of the derivatives 
   $\partial_{x_{j}}u,\partial_{x_{j}}v$, $0\le j \le n$, 
   give the estimates
   \begin{equation*}
      \big| \ELEDOT{(\Lambda_{1}-\Lambda_{2})u}{v}_{\TT{}\times\partial\Omega} \big|
      \le C k \big( |||\Lambda_{1}-\Lambda_{2}||| + \mathcal{O}(k^{-1}) \big), 
   \end{equation*}
   \begin{equation*}
      \big| \ELEDOTT{(A^{(1)}_{j}-A^{(2)}_{j})u}{(-\rmi\DXJ{j} v)} \big| 
      \le C k \big( ||| \,\mathcal{A}\, |||_{0} + \mathcal{O}(k^{-1}) \big), 
   \end{equation*}
   \begin{equation*}
      \big| \ELEDOTT{(A^{(1)}_{j}-A^{(2)}_{j})(-\rmi\DXJ{j} u)}{v} \big|  
      \le C k \big( ||| \,\mathcal{A}\, |||_{0} + \mathcal{O}(k^{-1}) \big), 
   \end{equation*}
   \begin{equation*}
      \big| \ELEDOTT{\big[ (A_{j}^{(2)} )^{2} - (A_{j}^{(1)} )^{2}\big]u}{v} \big| 
      \le C ||| \,\mathcal{A}\, |||_{0} , 
   \end{equation*}
   where the last inequality follows from the fact that 
   $|A^{(1)}_{j}(t,x)-A^{(2)}_{j}(t,x)| \le C$ for all
   $(t,x)\in\ERRE{}_{t}\times\ERRE{n}_{x}$.

   On the other hand, 
   the LHS of \eqref{Greens:with:triangle} gives
   \begin{multline}\label{LHS:Greens:with:triangle}
        \Big| \int_{T_{1}}^{T_{2}}\int_{\Omega} V(t,x)u(t,x)\overline{v(t,x)} \, \rmd x \rmd t \Big|
        = \Big| \int_{T_{1}}^{T_{2}}\int_{\Omega} 
       V(t,x) \chi_{1}(t,x)\chi_{2}(t,x) 
              \thinspace \times \\
              \rme^{-i \int_{-\infty}^{\frac{1}{2} ( t+\omega\cdot x )} 
                     \big( A_{0} + \sum_{j=1}^{n} \omega_{j}A_{j}\big)(t'+s,x'+s\omega) \thinspace \mathrm{d}s } 
                     \thinspace \mathrm{d}x\thinspace \mathrm{d}t + \cdots \Big|
   \end{multline}
   where $(t',x')$ is the projection of $(t,x)$ onto $\Pi_{(1,\omega)}$ and 
   ``$\cdots$'' represents terms of order $\mathcal{O}(k^{-1})$. Also,
   a simple analysis of $\rme^{iz}$ for small $|z|$ gives
   \begin{equation*}
      \rme^{-i \int_{-\infty}^{\frac{1}{2} ( t+\omega\cdot x )} 
               \big( A_{0} + \sum_{j=1}^{n} \omega_{j}A_{j}\big)(t'+s,x'+s\omega) \thinspace \mathrm{d}s } 
      = 1 + \mathcal{O}\big(||| \,\mathcal{A}\, |||_{0}\big)
   \end{equation*}
   and thus \eqref{Greens:with:triangle} leads to
   \begin{multline*}
      \Big| \int_{T_{1}}^{T_{2}}\int_{\Omega} 
       V(t,x) \chi_{1}(t,x)\chi_{2}(t,x) 
       \, \rmd x \rmd t \Big| \le \\
      C_{1}k \big(||| \,\mathcal{A}\, |||_{0} + |||\Lambda_{1}-\Lambda_{2}||| \big) 
      + C_{2} ||| \,\mathcal{A}\, |||_{0} + \mathcal{O}(k^{-1})
   \end{multline*}
   As in previous cases, the fact that the functions $\chi_{j}$ are supported
   near light rays shows that for $k>0$ the following estimate holds
   \begin{multline}\label{Greens:improved:for:scalar}
      \Big| \int_{-\infty}^{\infty} V(t+s,x+s\omega) \, \rmd s \Big| \le \\
      C_{1}k \big(||| \,\mathcal{A}\, |||_{0} + |||\Lambda_{1}-\Lambda_{2}||| \big) 
      + C_{2} ||| \,\mathcal{A}\, |||_{0} + \frac{C_{3}}{k}.
   \end{multline}
   Next we choose $k$ so that the first and last terms in the previous equation
   are comparable in size. To this end, let 
   \begin{equation*}
   k  = \big( |||\Lambda_{1} - \Lambda_{2} ||| + 
              ||| \,\mathcal{A}\, |||_{0} \big)^{-\frac{1}{2}},
   \end{equation*}
   then
   \begin{equation*}
      \big( |||\Lambda_{1} - \Lambda_{2} ||| + 
              ||| \,\mathcal{A}\, |||_{0} \big)k 
      = \frac{1}{k} \le C \big( |||\Lambda_{1} - \Lambda_{2} ||| + 
              ||| \,\mathcal{A}\, |||_{0} \big)^{\frac{1}{2}},
   \end{equation*}
   and \eqref{Greens:improved:for:scalar} gives
   \begin{align*}
      \Big| \int_{-\infty}^{\infty} V(t+s,x+s\omega) \, \rmd s \Big| 
       & \le C_{1} \big( |||\Lambda_{1}-\Lambda_{2}||| 
	     + ||| \,\mathcal{A}\, |||_{0} \big)^{\frac{1}{2}} 
             + C_{2} ||| \,\mathcal{A}\, |||_{0} \\
       & \le C \big( ||| \,\mathcal{A}\, |||^{\frac{1}{2}}_{0} 
              + |||\Lambda_{1}-\Lambda_{2}|||^{\frac{1}{2}} \big),
   \end{align*}
   where the last inequality holds when both 
   $|||\Lambda_{1} - \Lambda_{2}|||,||| \,\mathcal{A}\, |||_{0} < 1$
   (recall that if $0<\epsilon<1$, then $\epsilon < \sqrt{\epsilon} < 1$).
   Estimate \eqref{estimate:short} then gives
   \begin{align}\label{estimate:Xray:scalar:potentials} \nonumber
      \Big| \int_{-\infty}^{\infty} V(t+s,x+s\omega)\,\rmd s \Big| 
          & \le C \Big[ \big( \log\frac{1}{|||\Lambda_{1}-\Lambda_{2}|||} \big)^{-\frac{1}{2}} 
              + |||\Lambda_{1} - \Lambda_{2} |||^{\frac{1}{2}} \Big] \\  
          & \le C \big( \log\frac{1}{|||\Lambda_{1}-\Lambda_{2}|||} \big)^{-\frac{1}{2}} .
   \end{align}
   As in section \ref{sec:stability:estimates} we get from \eqref{estimate:Xray:scalar:potentials} 
   \begin{align*}
      \big|\big| \,V\, \big|\big|_{L^{\infty}(\ERRE{}_{t}\times\ERRE{n}_{x})} 
       & \le C' \Big( \log \frac{1}{ C \big( \log 
                \frac{1}{|||\Lambda_{1}-\Lambda_{2}|||} \big)^{-1/2}} \Big)^{-1} \\
       & \le C \Big( \log \big( \log \frac{1}{|||\Lambda_{1}-\Lambda_{2}|||}\big) \Big)^{-1}.
   \end{align*}
    \end{proof}


\pagebreak

\end{document}